\documentclass[12pt, reqno, twoside, letterpaper]{amsart}

\usepackage{z-Paperstyle}
\usepackage{graphicx}
\usepackage{tikz-cd}

\DeclareMathOperator{\Nm}{\cN_{K/\Q}}
\DeclareMathOperator{\Gal}{Gal}
\DeclareMathOperator{\re}{Re}
\DeclareMathOperator{\conv}{conv}
\DeclareMathOperator{\dist}{dist}
\DeclareMathOperator{\spn}{span}

\newcommand{\leg}[2]{\Bigl(\frac{#1}{#2}\Bigr)}
\newcommand{\smallleg}[2]{\bigl(\frac{#1}{#2}\bigr)}
\newcommand{\fd}{\mathfrak{d}}
\newcommand{\vfd}{\boldsymbol{\mathfrak{d}}}
\newcommand{\lmap}{\Theta}
\newcommand{\dbN}{\llbracket N\rrbracket}
\newcommand{\cube}{\dbN^4}
\newcommand{\ipol}{\wp}
\newcommand{\row}{\mathbf{row}}
\newcommand{\dotp}{%
  \mathbin{\vcenter{\hbox{\scalebox{0.55}{$\bullet$}}}}}

\newcommand{\ve}{\boldsymbol{e}}
\newcommand{\vh}{\boldsymbol{h}}
\newcommand{\vm}{\boldsymbol{m}}
\newcommand{\vn}{\boldsymbol{n}}
\newcommand{\vr}{\boldsymbol{r}}
\newcommand{\vs}{\boldsymbol{s}}
\newcommand{\vu}{\boldsymbol{u}}
\newcommand{\vv}{\boldsymbol{v}}
\newcommand{\vw}{\boldsymbol{w}}
\newcommand{\vx}{\boldsymbol{x}}
\newcommand{\vy}{\boldsymbol{y}}
\newcommand{\vz}{\boldsymbol{z}}
\newcommand{\valpha}{\boldsymbol{\alpha}}
\newcommand{\vbeta}{\boldsymbol{\beta}}
\newcommand{\vxi}{\boldsymbol{\xi}}
\newcommand{\vpsi}{\boldsymbol{\psi}}
\newcommand{\vzero}{\boldsymbol{0}}

\newcommand{\dnode}[2]{\begin{tabular}{@{}c@{}}#1\tabularnewline
{\footnotesize\itshape #2}\end{tabular}}
\newsavebox{\diagrambox}

\makeatletter
\patchcmd{\@settitle}{\uppercasenonmath\@title}{}{}{}
\makeatother

\title[On the OpenAI proof that Landau--Siegel zeros do not exist]
{\large On the OpenAI proof that\\ Landau--Siegel zeros do not exist}

\author{William D. Banks}
\address{Department of Mathematics\\
University of Missouri\\
Columbia, MO 65211\\
USA}
\email{bankswd@missouri.edu}

\keywords{Dirichlet $L$-functions, Landau--Siegel zeros, polynomial
interpolation, interpolation determinants}
\subjclass[2020]{Primary 11M20; Secondary 11J81}

\date{October 11, 2026}

\begin{document}

\begin{abstract}
A recent manuscript of OpenAI asserts that Landau--Siegel zeros do
not exist. Here, we give a detailed account of its argument, arranged
as a direct proof with explicit constants. There are absolute constants
$q_0$ and $\delta_0>0$ such that, for every primitive real Dirichlet
character $\chi$ of conductor $q\ge q_0$, every real zero
$\beta\in(0,1)$ of $L(s,\chi)$ satisfies
$(1-\beta)\log q\ge\delta_0$, and one can take
\[
q_0=10^{10}
\mand
\delta_0=3.0\times 10^{-3}.
\]
Combined with a recent computation of Lu, Zaman, and Zhao, this
result yields the bound $(1-\beta)\log q\ge 3.0\times 10^{-3}$
for every conductor $q\ge 3$.
The proof is an interpolation determinant argument in the biquadratic
field generated by $\sqrt{2}$ over the quadratic field attached to $\chi$.
A real zero close to one forces \mbox{$\chi(p)=-1$} for most primes $p$,
and the Frobenius congruence at these primes makes a certain nonzero
determinant divisible by higher powers of these primes than its
Archimedean size allows. The underlying
ideas are those of the OpenAI manuscript. The explicit constants,
and the presentation, are new.
\end{abstract}

\maketitle

\tableofcontents

\newpage\section{Introduction and statement of results}
\label{sec:intro}

\subsection{Background}

Let $\chi$ be a primitive real Dirichlet character of conductor $q$.
For a suitable absolute constant $c_1>0$, the function $L(s,\chi)$
has at most one zero $\beta$ in the interval
$1-c_1/\log q\le\beta<1$, and such a zero is simple;
see~\cite[Ch.~14]{Dav}. A zero of this kind is called a
\emph{Landau--Siegel zero}. Whether such zeros exist is one of the
central problems of analytic number theory, since they obstruct
uniform estimates for primes in arithmetic progressions and
effective lower bounds for class numbers of quadratic fields.

Siegel~\cite{Sie} proved that $L(1,\chi)\gg_\eps q^{-\eps}$ for
every $\eps>0$, and hence $1-\beta\gg_\eps q^{-\eps}$, but his
method gives no means of computing the implied constant. Page~\cite{Pag}
showed that, for every $Q\ge 3$, at most one primitive real
character of conductor at most $Q$ has a zero in the interval
$(1-c_2/\log Q,1)$, where $c_2>0$ is an absolute constant. On the
effective side, the work of Goldfeld~\cite{Gol} and of Gross and
Zagier~\cite{GZ} yields lower bounds of logarithmic order for the
class numbers of imaginary quadratic fields, which is far from what
the nonexistence of Landau--Siegel zeros would give.

The manuscript~\cite{OAI} of OpenAI, dated October 1, 2026, asserts
that Landau--Siegel zeros do not exist. More precisely, it states
that there is an absolute constant $c>0$ such that
$(1-\beta)\log q\ge c$ for every real zero $\beta\in(0,1)$ of
$L(s,\chi)$. The proof given there is quite short. It is an
interpolation determinant argument of a kind familiar from transcendence
theory~\cite{Lau}, and its only analytic input is a classical
inequality for the logarithmic derivative of $L(s,\chi)$.

The argument of the present paper is the argument of~\cite{OAI}, and
we claim no originality for its ideas. Our aim is to make that
argument more accessible and to confirm it. The proof in~\cite{OAI}
is terse, and it proceeds by contradiction along a sequence of
characters, so it provides no numerical value for any constant. Here
the proof is arranged as a direct argument in which every step is
written out, and the notation has been chosen with the reader in mind.
The explicit constants $q_0$ and $\delta_0$ of Theorem~\ref{thm:main}
are new.

The paper was prepared as follows. The author first studied the
manuscript~\cite{OAI}, and, with the assistance of the large language
model Claude (Anthropic), produced an initial report on the manuscript
and an expanded draft of the proof. That draft is the
basis of the present text. The author worked through the
argument line by line, rewriting proofs and changing notation
to aid understanding. Every line of the paper has been
verified by the author, who believes the argument to be sound and
takes full responsibility for the present text. In view of the importance of
the statement, independent scrutiny is essential; this paper is
written to make that scrutiny as easy as possible.

\subsection{Results}

In this paper we give a detailed proof that Landau--Siegel zeros do
not exist, in a form that is explicit for all large conductors. The
statement is that of~\cite{OAI} (where no constant is given a
numerical value) together with admissible values of the two
constants that govern large conductors. Our main result is the following.

\begin{theorem}\label{thm:main}
There are absolute constants $q_0\ge 9$ and $\delta_0>0$ such that for
every primitive real Dirichlet character $\chi$ of conductor
$q\ge q_0$, every real zero $\beta\in(0,1)$ of $L(s,\chi)$ satisfies
\[
(1-\beta)\log q\ge\delta_0.
\]
One can take $q_0=10^{10}$ and
$\delta_0=3.0\times 10^{-3}$. Consequently, there is an absolute
constant $c>0$ such that $(1-\beta)\log q\ge c$ for every real zero
$\beta\in(0,1)$ of $L(s,\chi)$, for every primitive real Dirichlet
character $\chi$ of conductor $q\ge 3$. By the computation
of~\cite{LZZ}, one can take $c=3.0\times 10^{-3}$.
\end{theorem}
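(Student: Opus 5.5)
The plan is to argue directly, not by contradiction along a sequence. Fix a primitive real $\chi$ of conductor $q\ge q_0$, and let $K=\Q(\sqrt{\chi(-1)q})$ be the attached quadratic field. Suppose $L(\beta,\chi)=0$ with $\beta$ real and $(1-\beta)\log q=\delta<\delta_0$; we derive a contradiction once $\delta_0$ is small enough.

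\textbf{Analytic input.} I would take the classical explicit inequality for $-\re\,L'/L(\sigma,\chi)$ at $\sigma=1+\lambda/\log q$. This inequality contains the term $-1/(\sigma-\beta)$ from the real zero, and it is compared with $-\zeta'/\zeta(\sigma)$. The output should be a bound of the form
\[
\sum_{p}\frac{(1+\chi(p))\log p}{p^{\sigma}}\ \ll\ \bigl(\delta/\lambda+\lambda\bigr)\log q ,
\]
with explicit constants, and then a dyadic or smooth restriction to a range $q^{a}\le p\le q^{b}$. The consequence is that the primes $p$ in this range with $\chi(p)\ne-1$ carry only a proportion $O(\delta)$, up to the $\lambda$-optimisation, of the total weight $\sum\log p/p$. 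In other words, most primes in the range are inert in $K$.

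\textbf{Algebraic setup.} Put $F=K(\sqrt2)$, a biquadratic field, and let $\sigma$ be the automorphism of $F$ fixing $\sqrt2$ and acting nontrivially on $K$. For an odd prime $p$ inert in $K$ and split in $\Q(\sqrt2)$, Frobenius at each prime above $p$ is $\sigma$. This gives $\alpha^{\sigma}\equiv\alpha^{p}\pmod{p\mathcal O_F}$ for all $\alpha\in\mathcal O_F$. The choice of $\sqrt2$ supplies a fixed element, such as the unit $1+\sqrt2$, whose heights are independent of $q$. I would then form an interpolation determinant
\[
\Delta=\det\bigl(\phi_i(\vx_j)\bigr),
\]
with rows indexed by monomials $\phi_i$ of bounded degree in four variables, and columns indexed by points $\vx_j$ running over the cube $\cube$. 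The points are built from $\sqrt2$, $\sqrt{\chi(-1)q}$ and their conjugates, and $\Delta$ is arranged so that it lies in $\mathcal O_F$ and is fixed by, or anti-invariant under, $\sigma$. The Frobenius congruence lets one pair columns $\vx$ and $\vx^{\sigma}$ modulo $p$. Using the standard multiplicity-counting argument for interpolation determinants (in the style of Laurent), each good prime $p\le N^{c}$ then divides $\Delta$ to an order growing like $(\#\text{columns})^{2}/p$ or similar.

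\textbf{Comparison.} I would establish three things.
\begin{itemize}
\item[(i)] $\Delta\ne0$, via a zero estimate or Vandermonde-type structure in the biquadratic field.
\item[(ii)] An Archimedean upper bound $\log|\Delta^{\tau}|\le$ (explicit) for every embedding $\tau$, via Hadamard's inequality and the Schwarz-lemma gain for interpolation determinants.
\item[(iii)] The lower bound $\log|\Nm\Delta|\ge\sum_{p}v_p(\Delta)\log p$, summed over inert primes in the range. This sum is controlled from below by the analytic input: the full sum $\sum\log p/p$ minus the $O(\delta)$ exceptional part.
\end{itemize}
Choosing $N$ as a suitable power of $\log q$ or of $q$ makes the divisibility exceed the size bound by a margin proportional to $1-O(\delta)$. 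This forces $\delta\ge\delta_0$ once $q\ge10^{10}$, and the numerics of the $\lambda$-optimisation should give $\delta_0=3.0\times10^{-3}$. The final sentence of the theorem then follows by combining this with the computation of \cite{LZZ}, which covers $3\le q<10^{10}$ at the same level.

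\textbf{Main obstacle.} I expect the hardest part to be step (i) together with the balancing. The nonvanishing of $\Delta$ must hold uniformly in $q$. In addition, the $p$-adic multiplicities must beat the Hadamard bound by a definite factor, which requires choosing the degree, the cube size $N$ and the prime range consistently. I would first try to prove nonvanishing by reducing $\Delta$ to a product of Vandermonde determinants in the conjugates of a single generator of $F$, and I would fall back on a linear-independence or zero-estimate lemma if that reduction fails.
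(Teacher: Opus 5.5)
There is a genuine gap: your outline never supplies the mechanism that makes the $p$-adic bound beat the Archimedean one, and several of your specific choices would fail. Suppose the rows are monomials in four variables that are independent linear forms in $\vn$, for instance the four conjugates of $\theta_{\vn}$. Then degrees of order $N$ already suffice on $N^4$ lattice points. The primes therefore contribute at most about $\log N$ per unit of degree, while Hadamard charges $\log N+\frac12\log q$, so no contradiction is possible. Indeed, with all four conjugates as variables every unramified prime becomes usable, and $\chi$ drops out entirely.

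The gain in the paper comes from taking exactly three variables, $\theta,\sigma(\theta),\sigma\tau(\theta)$, on the rank-four lattice $\Z[a,b]$. That is the role of $\sqrt2$; a unit of bounded height plays no part. Then $N^4$ independent rows force degrees of order $M=N^{4/3}$, and the comparison is $S_1\log M$ against $\frac34(S_1+S_{23})\log M$. This thin margin is also why you cannot restrict, as you do, to primes split in $\Q(\sqrt2)$. Those primes carry only half of $\sum_{p\le M}\log p/p$, and $\frac12\log M<\frac34\log M$. You also need the primes with $\bigl(\frac{2}{p}\bigr)=-1$, whose Frobenius is $\sigma\tau$; this is exactly why $\sigma\tau(\theta)$ is one of the variables.

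Finally, $N$ cannot be a power of $\log q$. It must be a large fixed power of $q$, because two terms must be small compared with $\log M$: the $\frac12\log q$ per unit degree coming from $|\sqrt d|$, and the loss $C_2\log q$ in the prime bias. For the same reason the exceptional proportion of prime mass is $\ll\delta\log M/\log q+\log q/\log M$, not $O(\delta)$.

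The $p$-adic step you describe is also not the one that works. Divisibility does not come from pairing columns $\vx$ and $\vx^{\sigma}$, nor from a Laurent-type count of order $(\#\text{columns})^2/p$. The true exponent is $E_p=\sum_{\valpha}\lfloor\alpha_1/p\rfloor\approx S_1/p$, of size $(\#\text{columns})^{4/3}/p$, and it comes from row operations. Write $\alpha_1=pk+\varrho$ and replace $\theta^{pk}$ by $(\theta^p-g_p(\theta))^k$. The new row has all entries divisible by $p^k$, and it differs from the old one by rows in which $p$ factors of $\theta$ are traded for one factor of $g_p(\theta)$. For $\Delta$ to be unchanged, those rows must lie in the span of earlier selected rows. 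This is why the paper orders monomials by the weight $\alpha_1+H\alpha_2+H\alpha_3$, uses only primes $p>H$, and selects rows greedily.

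The greedy selection also resolves your ``main obstacle'' differently from what you propose: $\Delta\ne0$ is automatic. With three variables on a rank-four lattice no Vandermonde factorization is available. The real work is Proposition~\ref{prop:interp}, which shows, uniformly in $d$ and using only the $\Q$-linear independence of $1,a,b,ab$, that the rows with $\alpha_1\ll H^{2/3}M$ and $\alpha_2,\alpha_3\ll H^{-1/3}M$ already span. That gives $S_1\ge N^4M$ and $S_{23}\le S_1/12$, which is indispensable because only the degree in $\theta$ contributes to $E_p$. On the Archimedean side no Schwarz-lemma gain is needed; plain Hadamard suffices.
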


\begin{proof}[Proof of the second assertion from the first]
There are only finitely many primitive real characters $\chi$ of
conductor $q<q_0$. For each of them, the entire function $L(s,\chi)$
is not identically zero, so it has only finitely many zeros in
$[0,1]$. Hence $(1-\beta)\log q$ takes only
finitely many values, all of them positive, as $\chi$ runs over
these characters and $\beta$ over the real zeros of $L(s,\chi)$ in
$(0,1)$. Let $c_0$ be the least of these values, or
$c_0\defeq\delta_0$ if there are none. Then the second assertion
holds with $c\defeq\min\{\delta_0,c_0\}$.

For the last assertion, we take $q_0=10^{10}$ and
$\delta_0=3.0\times 10^{-3}$. By
\mbox{\cite[Theorem~1.1]{LZZ}}, which rests on a computation of
Lu, Zaman, and Zhao, the function $L(s,\chi)$ has no real zero
$\beta\ge 1-1/(5\log q)$ when $q\le 10^{10}$. Hence
$c_0>\tfrac15>\delta_0$, and therefore $c=\delta_0$.
\end{proof}

\subsection{Outline of the proof}
\label{sec:outline}

Throughout, $\beta\in(0,1)$ is a real zero of $L(s,\chi)$, and we put
$\delta\defeq(1-\beta)\log q$. We must show that $\delta$ cannot be
small when $q$ is large. The proof bounds a single nonzero integer
from above and from below, and it has four ingredients.

\medskip\noindent\emph{Prime bias.}
A zero with $\delta$ small forces $\chi(p)=-1$ for most primes $p$.
More precisely, Lemma~\ref{lem:bias} shows that for all
$X\ge 8$ and all integers $H\ge 319$,
\[
\ssum{H<p\le X\\ \chi(p)=-1}\frac{\log p}{p}
\ge\log X-\log H-C_1\,\frac{\delta(\log X)^2}{\log q}
-\Bigl(C_2+\frac{1}{2H}\Bigr)\log q
\]
with explicit absolute constants $C_1$ and $C_2$. The sum of $(\log p)/p$ over
all primes $p\le X$ is $\log X+O(1)$. Hence, if $X$ is a large power
of $q$ and $\delta$ is small, then the primes with $\chi(p)=-1$
carry almost all of the logarithmic mass. This inequality is
classical, and it is the only place where the zero $\beta$ enters.

\medskip\noindent\emph{The determinant.}
Let $\Q(\sqrt{d}\,)$ be the quadratic field attached to $\chi$, with
$d$ squarefree, and fix $a,b\in\C$ with $a^2=d$ and $b^2=2$. For
$q\ge 9$ the field $K\defeq\Q(a,b)$ is biquadratic, with Galois
group $\Gal(K/\Q)=\{1,\sigma,\tau,\sigma\tau\}$, where $\sigma$
changes the sign of $a$ and $\tau$ changes the sign of $b$. For an integer
$N\ge 1$ and $\vn\in\{0,\ldots,N-1\}^4$, we put
\[
\theta_{\vn}\defeq n_1+n_2a+n_3b+n_4ab,
\]
and to every $\valpha=(\alpha_1,\alpha_2,\alpha_3)\in\Z_{\ge 0}^3$
we attach the row vector
\[
\row_{\valpha}\defeq\bigl(\theta_{\vn}^{\alpha_1}\,\sigma(\theta_{\vn})^{\alpha_2}\,
\sigma\tau(\theta_{\vn})^{\alpha_3}\bigr)_{\vn},
\]
which has $N^4$ entries. Let $M\defeq N^{4/3}$, so that $M^3=N^4$.
There are about $N^4$ rows with every $\alpha_j$ of order $M$, and
Proposition~\ref{prop:interp} shows that rows of this size suffice
to span the full space, with a constant that
\emph{does not depend on $d$.}
Its proof uses only linear algebra and convex geometry. The
proposition also allows unequal degrees. For integers
$1\le H\le N$, the rows with
\[
\alpha_1\le 32H^{2/3}M\mand\alpha_2,\alpha_3\le 32H^{-1/3}M
\]
already span (Corollary~\ref{cor:box}). We order all rows by the
weight $\alpha_1+H\alpha_2+H\alpha_3$ and retain a row when it is
not in the span of the earlier ones. This gives $N^4$ linearly
independent rows (Lemma~\ref{lem:greedy}), and their determinant
$\Delta$ is a nonzero element of $\Z[a,b]$. Thus, $\Nm(\Delta)$ is a
nonzero integer. Let $S_1$ be the sum of $\alpha_1$ over the
retained rows, and let $S_{23}$ be the sum of $\alpha_2+\alpha_3$.
For a suitable fixed $H$, Lemma~\ref{lem:degree} gives
\[
S_1\ge N^4M\mand S_{23}\le\tfrac{1}{12}\,S_1,
\]
so that nearly all of the degree lies in the first variable.

\medskip\noindent\emph{The Archimedean bound.}
Every conjugate of $\theta_{\vn}$ has absolute value less than
$8N\sqrt{q}$. Hadamard's inequality, applied to each conjugate of
$\Delta$, gives (Lemma~\ref{lem:arch})
\[
\frac14\log|\Nm(\Delta)|\le(S_1+S_{23})
\Bigl(\log N+\frac12\log q+\log 8\Bigr)+2N^4\log N.
\]

\medskip\noindent\emph{The $p$-adic bound.}
Let $p$ be an odd prime with $\chi(p)=-1$. Then
$\theta^p\equiv g_p(\theta)\bmod{p\,\Z[a,b]}$ for all
$\theta\in\Z[a,b]$, where $g_p$ is either $\sigma$ or $\sigma\tau$
(Lemma~\ref{lem:frob}). The point is that $g_p(\theta_{\vn})$ is
again one of our three variables. In a retained row, write
$\alpha_1=pk+\varrho$ with $0\le\varrho<p$, and replace the factor
$\theta_{\vn}^{pk}$ by
$\bigl(\theta_{\vn}^p-g_p(\theta_{\vn})\bigr)^k$. Then every entry of
the new row is divisible by $p^k$. On the other hand, the new row
differs from the old one by a combination of rows in which $p$
factors of $\theta_{\vn}$ have been exchanged for one factor of
$g_p(\theta_{\vn})$, several times over. If $p>H$, then these rows
have smaller weight, so they lie in the span of the earlier
retained rows, and the determinant is unchanged. Therefore
$\Delta$ is divisible by $p^{E_p}$, where $E_p$ is the sum of
$\lfloor\alpha_1/p\rfloor$ over the retained rows
(Lemma~\ref{lem:extract}). Summing over the primes $p\le M$, we
obtain (Corollary~\ref{cor:padic})
\[
\frac14\log|\Nm(\Delta)|\ge
S_1\ssum{H<p\le M\\ \chi(p)=-1}\frac{\log p}{p}-2N^4M.
\]

\medskip\noindent\emph{The comparison.}
The lattice has rank four, but there are only three variables, and
this is the reason for the relation $\log N=\tfrac34\log M$. The
main term of the upper bound is therefore
\[
\tfrac34(S_1+S_{23})\log M\le\tfrac{13}{16}\,S_1\log M.
\]
If the primes with $\chi(p)=-1$ carried all of the logarithmic
mass up to $M$, then the main term of the lower bound would be
$S_1\log M$, which is larger. The prime bias makes this precise.
We take $N\defeq\lceil q^\omega\rceil$ with a large fixed integer
$\omega$, so that every term of size $\log q$ is at most
$\tfrac{3}{32}\log M$, and the terms $2N^4\log N$ and $2N^4M$ are
$O(S_1)$. Dividing by $S_1\log M$, we are left with
\[
\frac{3}{16}\le\frac43(\omega+1)C_1\,\delta+\frac{3}{32}
\]
for all $q\ge q_0$, and this provides a lower bound for $\delta$.

\bigskip
The proof of the first assertion of Theorem~\ref{thm:main} occupies
\S\S\ref{sec:analytic}--\ref{sec:conclusion}. The diagram in
\S\ref{sec:diagram} shows how the results used in it depend on one
another.

\subsection{Conventions}

Throughout, $\chi$ denotes a primitive real Dirichlet character of
conductor $q\ge 3$, and we put
\[
\sL\defeq\log q.
\]
For any real zero $\beta\in(0,1)$ of $L(s,\chi)$, we put
$\delta\defeq(1-\beta)\sL$.

The letter $p$ always denotes a prime number, and $\Lambda$ is the
von Mangoldt function. We adopt the
convention that $z^0=1$ for all $z\in\C$, including $z=0$.

We write $|S|$ for the cardinality of a finite set $S$. Vectors are
set in boldface and their coordinates in the
corresponding lightface letter; thus $\vx=(x_1,\ldots,x_4)$ for
$\vx\in\R^4$, and $\vzero$ is the zero vector. For
$\vx,\vy\in\R^4$ we denote by
\[
\vx\dotp\vy\defeq x_1y_1+\cdots+x_4y_4
\mand
\|\vx\|\defeq(\vx\dotp\vx)^{1/2}
\]
the dot product and the Euclidean norm, respectively.
We use the symbols $\ll$, $\gg$, and $O$ with their usual meanings.
The implied constants are absolute, as is every constant
introduced below.

\subsection{The quadratic field attached to
\texorpdfstring{$\chi$}{chi}}
\label{sec:quadfield}

A primitive real character of conductor $q$ is the Kronecker symbol
\[
\chi(n)=\Bigl(\frac{D}{n}\Bigr)
\]
for a fundamental discriminant $D$ with $|D|=q$;
see~\cite[Ch.~5]{Dav}.
From now on, we write $\Q(\sqrt{D}\,)=\Q(\sqrt{d}\,)$
with $d\ne 1$ squarefree, and so $D=d$ or $D=4d$. Then
\be\label{eq:field}
1\le|d|\le q,\qquad \chi(p)=\leg{d}{p}\quad\text{(Legendre symbol) for
every odd prime $p$.}
\ee
The case $d=2$ occurs only for $q=8$, so it is excluded when $q\ge 9$.

\subsection{Dependency diagram}\label{sec:diagram}

The following diagram shows how the results of the paper depend on
one another.

\begin{lrbox}{\diagrambox}
\begin{tikzcd}[column sep=small,row sep=scriptsize]
\dnode{Lemma~\ref{lem:nearest}}{nearest point} \arrow[dr]
& \dnode{Lemma~\ref{lem:faces}}{faces of a polytope} \arrow[d]
& \dnode{Lemma~\ref{lem:nonvanish}}{nonvanishing on $\Z^4$}
  \arrow[dl]
& \\
& \dnode{Proposition~\ref{prop:interp}}{interpolation} \arrow[d]
& & \\
& \dnode{Corollary~\ref{cor:box}}{spanning rows} \arrow[d]
& \dnode{Lemma~\ref{lem:frob}}{Frobenius congruence} \arrow[d] & \\
& \dnode{Lemma~\ref{lem:greedy}}{greedy selection}
  \arrow[dl] \arrow[r]
& \dnode{Lemma~\ref{lem:extract}}{prime powers in $\Delta$}
  \arrow[d]
& \dnode{Lemma~\ref{lem:bias}}{prime bias} \arrow[dl] \\
\dnode{Lemma~\ref{lem:degree}}{$S_1$ and $S_{23}$} \arrow[dr]
& \dnode{Lemma~\ref{lem:arch}}{upper bound} \arrow[d]
& \dnode{Corollary~\ref{cor:padic}}{lower bound} \arrow[dl] & \\
& \dnode{Theorem~\ref{thm:main}}{$\delta\ge\delta_0$} & &
\end{tikzcd}
\end{lrbox}
\[
\setlength{\fboxsep}{8pt}
\scalebox{1}{\fbox{\usebox{\diagrambox}}}
\]

\section{Analytic input}
\label{sec:analytic}

We begin with the classical bound for the logarithmic
derivative of $L(s,\chi)$ on which the prime bias rests. Let
$\beta\in(0,1)$ be a real zero of $L(s,\chi)$, and let
$\eps\in\{0,1\}$ be such that $\chi(-1)=(-1)^\eps$.
The classical partial fraction formula (see~\cite[Ch.~12,
eqs.~(17)--(18)]{Dav}) states that for all $s$,
\[
\frac{L'}{L}(s,\chi)=-\frac12\log\frac{q}{\pi}
-\frac12\,\frac{\Gamma'}{\Gamma}\Bigl(\frac{s+\eps}{2}\Bigr)
+B(\chi)+\sum_{\rho}\Bigl(\frac{1}{s-\rho}+\frac{1}{\rho}\Bigr),
\]
where $\rho$ runs over the nontrivial zeros with multiplicity (all
have $0\le\re\rho\le 1$) and $\sum_\rho\re(1/\rho)$ converges. Moreover,
\[
\re B(\chi)=-\sum_\rho\re(1/\rho).
\]
Since $\chi$ is real, $L'/L$ is real for real $s=\sigma>1$; thus,
taking real parts, we get that
\be\label{eq:logder}
-\frac{L'}{L}(\sigma,\chi)=\frac12\log\frac{q}{\pi}
+\frac12\,\frac{\Gamma'}{\Gamma}\Bigl(\frac{\sigma+\eps}{2}\Bigr)
-\sum_{\rho}\re\frac{1}{\sigma-\rho}\qquad(\sigma>1).
\ee
Because
\[
\re\frac{1}{\sigma-\rho}=\frac{\sigma-\re\rho}{|\sigma-\rho|^2}>0
\]
for every zero in the sum~\eqref{eq:logder}, discarding all zeros
except $\rho=\beta$,
it follows that
\be\label{eq:logder2}
-\frac{L'}{L}(\sigma,\chi)\le\frac12\log\frac{q}{\pi}
+\frac12\,\frac{\Gamma'}{\Gamma}\Bigl(\frac{\sigma+\eps}{2}\Bigr)
-\frac{1}{\sigma-\beta}\qquad(\sigma>1).
\ee

\begin{lemma}[Prime bias]\label{lem:bias}
Let $\lambda\in(0,2]$, and let $C_1$ and $C_2$ be real numbers such that
\[
C_1\ge\frac{\er^{\lambda}}{2\lambda^2}\mand
C_2\ge\frac{\er^{\lambda}}{4}.
\]
Let $\beta\in(0,1)$ be a real zero of $L(s,\chi)$ and
$\delta=(1-\beta)\sL$. Then for all $X\ge 8$,
\be\label{eq:bias1}
\ssum{p\le X\\ \chi(p)=1}\frac{\log p}{p}
+\frac12\ssum{p\le X\\ p\,\mid\,q}\frac{\log p}{p}
\le C_1\,\frac{\delta(\log X)^2}{\sL}+C_2\,\sL,
\ee
and for all $X\ge 8$ and all integers $H\ge 319$,
\be\label{eq:bias2}
\ssum{H<p\le X\\ \chi(p)=-1}\frac{\log p}{p}
\ge\log X-\log H-C_1\,\frac{\delta(\log X)^2}{\sL}
-\Bigl(C_2+\frac{1}{2H}\Bigr)\sL.
\ee
In particular, with $\lambda=\tfrac32$ one can take
$C_1=0.99594$ and $C_2=1.12043$.
\end{lemma}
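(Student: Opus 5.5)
The plan is the classical smoothed-sum argument with $\sigma=1+\lambda/\log X$. I will work with the nonnegative function $F(\sigma)\defeq-\frac{L'}{L}(\sigma)-\frac{\zeta'}{\zeta}(\sigma)$. Its Dirichlet coefficients are $\Lambda(n)(1+\chi(n))\ge 0$, and at primes with $\chi(p)=1$ the coefficient is $2\log p$, while at primes dividing $q$ it is $\log p$.

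\emph{Upper bound for $F(\sigma)$.} Combining \eqref{eq:logder2} with the elementary bound $-\frac{\zeta'}{\zeta}(\sigma)\le\frac{1}{\sigma-1}$ for $\sigma>1$ gives
\[
F(\sigma)\le \tfrac12\log\tfrac{q}{\pi}+\tfrac12\tfrac{\Gamma'}{\Gamma}\bigl(\tfrac{\sigma+\eps}{2}\bigr)+\frac{1}{\sigma-1}-\frac{1}{\sigma-\beta}.
\]
For $1<\sigma\le 2$ the digamma term is negative (it is $\le\frac12\psi(3/2)<0.04$ in any case), so it can be absorbed against $-\tfrac12\log\pi$. The difference of the two fractions equals $\frac{1-\beta}{(\sigma-1)(\sigma-\beta)}\le\frac{1-\beta}{(\sigma-1)^2}=\frac{\delta(\log X)^2}{\lambda^2\sL}$. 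Hence $F(\sigma)\le\frac12\sL+\frac{\delta(\log X)^2}{\lambda^2\sL}$. The restriction $\lambda\le2$ together with $X\ge 8$ guarantees $\sigma\le 1+2/\log 8<2$.

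\emph{Lower bound for $F(\sigma)$.} Discard prime powers and all $n>X$, and keep only primes $p\le X$. For such $p$ we have $p^{-\sigma}=p^{-1}p^{-\lambda/\log X}\ge \er^{-\lambda}/p$. This gives
\[
F(\sigma)\ge \er^{-\lambda}\Bigl(2\ssum{p\le X\\ \chi(p)=1}\frac{\log p}{p}+\ssum{p\le X\\ p\mid q}\frac{\log p}{p}\Bigr).
\]
Comparing the two bounds and multiplying by $\er^{\lambda}/2$ yields \eqref{eq:bias1} with $C_1=\er^\lambda/(2\lambda^2)$ and $C_2=\er^\lambda/4$. It remains to check that the leftover constant $-\tfrac12\log\pi+\tfrac12\psi$ is nonpositive, so that no extra additive term appears. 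This is the one numeric check in this part, and I expect it to hold since $\psi(x)\le\psi(3/2)\approx0.036<\log\pi$.

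\emph{Deducing \eqref{eq:bias2}.} The main obstacle is here, because it needs an explicit Mertens-type lower bound
\[
\sum_{H<p\le X}\frac{\log p}{p}\ge\log X-\log H-\eta(H)
\]
with a small error. Rosser--Schoenfeld give $\sum_{p\le x}\frac{\log p}{p}=\log x+E+O(1/\log^2 x)$ with explicit bounds, valid for $x\ge 319$; this is where the threshold $H\ge319$ should come from. From these one obtains $\sum_{H<p\le X}\ge\log X-\log H-c/\log^2 H$ for some small explicit $c$ (when $X\le H$ the inequality is trivial, since its right side is negative).

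Next subtract the primes with $\chi(p)=1$ and those dividing $q$. The contribution of $p\mid q$ from \eqref{eq:bias1} carries only weight $\tfrac12$, so I bound the other half directly by $\frac12\sum_{p\mid q,\,p>H}\frac{\log p}{p}\le\frac{1}{2H}\sum_{p\mid q}\log p\le\frac{\sL}{2H}$. This accounts for the term $\frac{1}{2H}\sL$. The Mertens error $c/\log^2H$ must then be absorbed, and I would absorb it into the slack between $C_2=1.12043$ and $\er^{1.5}/4\approx1.12042$ times $\sL\ge\log 3$. Alternatively the slack may come from the discarded digamma term. The final step is to evaluate the numbers at $\lambda=\tfrac32$: $\er^{1.5}/4.5\approx0.99594$ and $\er^{1.5}/4\approx1.12042$. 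I would confirm that $C_2$ has been rounded up enough to cover the error. If it has not, I would instead retain the negative digamma term for extra room.
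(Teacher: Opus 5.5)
Up to \eqref{eq:bias1} your argument is the paper's. You use the same $\sigma=1+\lambda/\log X$, the same nonnegative series $\sum\Lambda(n)(1+\chi(n))n^{-\sigma}$, and the same lower bound $\er^{-\lambda}(2S_++S_0)$ from the primes $p\le X$. The gap is in the one step that separates \eqref{eq:bias2} from \eqref{eq:bias1}: absorbing the Mertens error.

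\emph{Your primary plan fails.} You propose to use the rounding slack between $C_2=1.12043$ and $\er^{3/2}/4$. The lemma is claimed for every $C_2\ge\er^{\lambda}/4$, including $C_2=\er^{\lambda}/4$ exactly, and that is the value the proof of Theorem~\ref{thm:main} uses. So in general there is no slack at all. Even for $C_2=1.12043$ the slack is only about $8\times10^{-6}\,\sL$, while the error to be absorbed is an absolute constant. You would need $\sL$ of roughly $2\times10^4$.

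\emph{The error is also larger than you assumed.} The bounds of \cite[Theorem~6]{RS} have error $\pm1/(2\log x)$, not $O(1/\log^2 x)$. The lower bound holds for $x>1$ and the upper bound for $x\ge319$. So the loss is $\frac{1}{2\log X}+\frac{1}{2\log H}<\frac{1}{\log H}$, which is only bounded by $1/\log 319>0.17$.

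\emph{The fix is your fallback, made precise.} The room does not come from the digamma term, which can be slightly positive: when $\eps=1$ and $\sigma$ is near $2$ it can reach $\tfrac12\frac{\Gamma'}{\Gamma}(\tfrac32)\approx0.018$. It comes from the whole constant you discarded, since
$-\tfrac12\log\pi+\tfrac12\frac{\Gamma'}{\Gamma}(\tfrac{\sigma+\eps}{2})\le-\kappa$ with $\kappa=\tfrac12\log\pi-\tfrac12\frac{\Gamma'}{\Gamma}(\tfrac32)=0.5541\ldots$.
Keeping it gives \eqref{eq:bias1} with an extra $-\er^{\lambda}\kappa/2$ on the right. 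Since $\er^{\lambda}\kappa/2>\kappa/2>0.277>1/\log 319$, this absorbs the Mertens loss uniformly in $\lambda\in(0,2]$, $H\ge319$ and $q$. This is exactly the paper's argument.

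Separately, your bound $\tfrac12\sum_{p\mid q,\,p>H}\frac{\log p}{p}\le\frac{1}{2H}\sum_{p\mid q}\log p\le\frac{\sL}{2H}$ is correct. It is slightly cleaner than the paper's count of the prime divisors of $q$ exceeding $H$.
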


\begin{proof}
For $\sigma\in(1,2]$ the argument $(\sigma+\eps)/2$ in
\eqref{eq:logder2} lies in $(\tfrac12,\tfrac32]$, and
$\Gamma'/\Gamma$ is increasing on the positive real axis, so
\[
-\frac{L'}{L}(\sigma,\chi)\le\frac{\sL}{2}-\kappa
-\frac{1}{\sigma-\beta}\qquad(\sigma\in(1,2]),
\]
where
\[
\kappa\defeq\frac12\log\pi-\frac12\,\frac{\Gamma'}{\Gamma}
\Bigl(\frac32\Bigr)=0.5541\cdots.
\]
Since $-\zeta'/\zeta(\sigma)\le 1/(\sigma-1)$ for $\sigma\in(1,2]$,
it follows that
\be\label{eq:sumbound}
\sum_{n\ge 1}\frac{\Lambda(n)(1+\chi(n))}{n^\sigma}
\le\frac{\sL}{2}-\kappa+\frac{1}{\sigma-1}-\frac{1}{\sigma-\beta}
\qquad(\sigma\in(1,2]).
\ee

Now let $X\ge 8$ and put $\sigma\defeq 1+\lambda/\log X$. Then
$\sigma\in(1,2]$, since $\log X\ge\log 8>2\ge\lambda$. Every term
of the series in \eqref{eq:sumbound} is nonnegative, so the series
is at least its subsum over the primes $p\le X$ with $\chi(p)\ne-1$.
For such a prime, $1+\chi(p)$ equals $2$ if $\chi(p)=1$ and equals
$1$ if $p\mid q$, and
\[
\frac{\log p}{p^\sigma}
=\frac{\log p}{p}\,\er^{-\lambda\log p/\log X}
\ge\er^{-\lambda}\,\frac{\log p}{p}.
\]
Writing
\[
S_+\defeq\ssum{p\le X\\ \chi(p)=1}\frac{\log p}{p}\mand
S_0\defeq\ssum{p\le X\\ p\,\mid\,q}\frac{\log p}{p},
\]
the left side of \eqref{eq:sumbound} is therefore at least
$\er^{-\lambda}(2S_++S_0)$. On the right side,
\[
\frac{1}{\sigma-1}-\frac{1}{\sigma-\beta}
=\frac{1-\beta}{(\sigma-1)(\sigma-\beta)}
\le\frac{1-\beta}{(\sigma-1)^2}
=\frac{\delta(\log X)^2}{\lambda^2\sL}.
\]
Multiplying by $\er^{\lambda}/2$ and using the lower bounds for
$C_1$ and $C_2$, we obtain
\be\label{eq:bias0}
S_++\frac12\,S_0\le C_1\,\frac{\delta(\log X)^2}{\sL}+C_2\,\sL
-\frac{\er^{\lambda}\kappa}{2},
\ee
which is stronger than \eqref{eq:bias1}.

We turn to \eqref{eq:bias2}. If $X\le H$, the sum is empty and the
right side of \eqref{eq:bias2} is negative, so we may assume that
$X>H\ge 319$. By the explicit form of Mertens'
theorem~\cite[Theorem~6]{RS}, with a certain absolute constant $E$,
\[
\sum_{p\le x}\frac{\log p}{p}>\log x+E-\frac{1}{2\log x}\quad(x>1),
\qquad
\sum_{p\le x}\frac{\log p}{p}<\log x+E+\frac{1}{2\log x}\quad(x\ge
319).
\]
Using the first bound at $x=X$ and the second at $x=H$, we get
\be\label{eq:mertens}
\sum_{H<p\le X}\frac{\log p}{p}
>\log X-\log H-\frac{1}{2\log X}-\frac{1}{2\log H}
>\log X-\log H-\frac{1}{\log H}.
\ee
A prime $p\in(H,X]$ with $\chi(p)\ne-1$ has $\chi(p)=1$ or
$p\mid q$. Hence
\be\label{eq:remove}
\ssum{H<p\le X\\ \chi(p)=-1}\frac{\log p}{p}
\ge\sum_{H<p\le X}\frac{\log p}{p}
-\Bigl(S_++\frac12\,S_0\Bigr)
-\frac12\ssum{p>H\\ p\,\mid\,q}\frac{\log p}{p}.
\ee
If $k$ is the number of primes $p>H$ dividing $q$, then $H^k<q$,
so $k<\sL/\log H$. Since $(\log t)/t$ is decreasing for $t\ge\er$,
each such prime contributes at most $(\log H)/H$, and therefore
\[
\ssum{p>H\\ p\,\mid\,q}\frac{\log p}{p}
\le\frac{\sL}{\log H}\cdot\frac{\log H}{H}=\frac{\sL}{H}.
\]
Inserting this bound, \eqref{eq:bias0}, and \eqref{eq:mertens}
into \eqref{eq:remove}, we find that the left side of
\eqref{eq:bias2} exceeds
\[
\log X-\log H-C_1\,\frac{\delta(\log X)^2}{\sL}
-\Bigl(C_2+\frac{1}{2H}\Bigr)\sL
+\frac{\er^{\lambda}\kappa}{2}-\frac{1}{\log H}.
\]
Finally, $\er^{\lambda}\kappa/2>\kappa/2>0.277$ and
$1/\log H\le 1/\log 319<0.174$, so the last two terms have a
positive sum, and \eqref{eq:bias2} follows.

For $\lambda=\tfrac32$ we have $\er^{3/2}=4.48168\cdots$, hence
$\er^{\lambda}/(2\lambda^2)=0.995930\cdots$ and
$\er^{\lambda}/4=1.120422\cdots$.
\end{proof}

\section{The interpolation proposition}
\label{sec:interpolation}

This section is pure linear algebra and convex geometry; there is no
number theoretic content. We begin with three elementary facts.

\begin{lemma}[Nearest point]\label{lem:nearest}
Let $\Omega\subset\R^4$ be nonempty, compact, and convex, and let
$\vz\in\R^4\setminus \Omega$. Let $\vy$ be a point in $\Omega$ that
lies
nearest $\vz$, and put $\vh\defeq\vz-\vy$. Then
\[
\vh\dotp(\vx-\vy)\le 0\qquad(\vx\in \Omega).
\]
\end{lemma}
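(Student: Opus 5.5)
We prove the nearest-point inequality by the standard variational argument, using only the convexity of $\Omega$ and the minimality of $\|\vz-\vy\|$. Existence of $\vy$ is part of the hypothesis (and in any case follows from compactness, since $\vx\mapsto\|\vz-\vx\|$ is continuous), so we only need to establish the inequality.

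Fix $\vx\in\Omega$ and, for $t\in[0,1]$, consider the point $\vy_t\defeq\vy+t(\vx-\vy)$. By convexity, $\vy_t\in\Omega$ for every $t\in[0,1]$. Since $\vy$ is a point of $\Omega$ nearest $\vz$, we have $\|\vz-\vy_t\|^2\ge\|\vz-\vy\|^2$ for all such $t$. Writing $\vz-\vy_t=\vh-t(\vx-\vy)$ and expanding the dot product gives
\[
\|\vh\|^2-2t\,\vh\dotp(\vx-\vy)+t^2\|\vx-\vy\|^2\ge\|\vh\|^2,
\]
that is,
\[
2t\,\vh\dotp(\vx-\vy)\le t^2\|\vx-\vy\|^2\qquad(t\in[0,1]).
\]

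Next we divide by $t>0$, obtaining $2\,\vh\dotp(\vx-\vy)\le t\|\vx-\vy\|^2$ for all $t\in(0,1]$. Letting $t\to0^+$ yields $\vh\dotp(\vx-\vy)\le 0$, which is the claim. There is no real obstacle here. The only point needing care is to keep $t$ strictly positive before dividing, so that the inequality does not reverse. The hypothesis $\vz\notin\Omega$ is not needed for the inequality itself; it only guarantees $\vh\ne\vzero$, which is presumably what later applications (for example Lemma~\ref{lem:faces}) use to obtain a separating hyperplane.
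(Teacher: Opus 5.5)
Your proposal is correct and follows the same route as the paper's proof: both move $\vy$ toward $\vx$ along the segment $\vy+t(\vx-\vy)\subset\Omega$ and compare with the minimal distance $\|\vh\|$. Both then expand the squared norm, divide by $t>0$, and let $t\to0^{+}$.
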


\begin{proof}
For $\vx\in \Omega$ and $0<\eta\le 1$, the point
$\vy+\eta(\vx-\vy)$ lies in $\Omega$ by convexity, and
\[
\|\vh\|=\dist(\vz,\vy)
\le\dist(\vz,\vy+\eta(\vx-\vy))
=\bigl\|\vh-\eta(\vx-\vy)\bigr\|.
\]
Squaring both sides and expanding using $\|\vu\|^2=\vu\dotp\vu$, we get
\[
0\le -2\eta\,\vh\dotp(\vx-\vy)+\eta^2\|\vx-\vy\|^2.
\]
Now divide by $\eta$ and let $\eta\to 0$.
\end{proof}

\begin{lemma}[Faces of a polytope with given vertices]\label{lem:faces}
Let $\cS\subset\R^4$ be finite and nonempty, with $\conv(\cS)$ its
convex hull. Fix $\vh\in\R^4\setminus\{\vzero\}$, and let
\[
m(\vh)\defeq\max_{\vv\in\conv(\cS)}\vh\dotp\vv
\mand
\cG_{\vh}\defeq\bigl\{\vv\in\conv(\cS):\vh\dotp\vv=m(\vh)\bigr\}.
\]
Then $\cS\cap\cG_{\vh}\ne\varnothing$,
$\cG_{\vh}=\conv(\cS\cap\cG_{\vh})$, and every point in $\cG_{\vh}$ is
a convex combination of at most four points of $\cS\cap\cG_{\vh}$.
\end{lemma}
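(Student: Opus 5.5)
We prove the three assertions in order, working throughout with the linear functional $\vv\mapsto\vh\dotp\vv$ and the standard description of $\conv(\cS)$ as the set of finite convex combinations of points of $\cS$. Since $\cS$ is finite, $\conv(\cS)$ is compact, so the maximum $m(\vh)$ is attained and $\cG_{\vh}$ is nonempty. Put $m_0\defeq\max_{\vs\in\cS}\vh\dotp\vs$, a maximum over a finite set.

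The first step is to show that $m(\vh)=m_0$. Clearly $m_0\le m(\vh)$. Conversely, any $\vv\in\conv(\cS)$ can be written $\vv=\sum_i t_i\vs_i$ with $t_i>0$, $\sum_i t_i=1$, and $\vs_i\in\cS$. Then
\[
\vh\dotp\vv=\sum_i t_i\,\vh\dotp\vs_i\le m_0,
\]
so $m(\vh)\le m_0$. Hence any $\vs\in\cS$ attaining $m_0$ lies in $\cG_{\vh}$, which gives $\cS\cap\cG_{\vh}\ne\varnothing$.

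The second step gives the face description. Take $\vv=\sum_i t_i\vs_i\in\cG_{\vh}$ as above, with all $t_i>0$. Then
\[
0=m(\vh)-\vh\dotp\vv=\sum_i t_i\bigl(m(\vh)-\vh\dotp\vs_i\bigr),
\]
and each summand is nonnegative. So every $\vs_i$ satisfies $\vh\dotp\vs_i=m(\vh)$, that is, $\vs_i\in\cS\cap\cG_{\vh}$. This proves $\cG_{\vh}\subseteq\conv(\cS\cap\cG_{\vh})$. The reverse inclusion holds because $\cG_{\vh}$ is convex (it is the intersection of a convex set with a hyperplane) and contains $\cS\cap\cG_{\vh}$.

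The third step, the bound ``at most four'', is the only nontrivial point. Plain Carathéodory in $\R^4$ would give five points; the saving comes from the fact that $\cG_{\vh}$ lies in the $3$-dimensional affine hyperplane $\{\vh\dotp\vv=m(\vh)\}$, since $\vh\ne\vzero$. I would run Carathéodory's reduction directly. Write $\vv=\sum_{i=1}^k t_i\vs_i$ with $\vs_i\in\cS\cap\cG_{\vh}$, all $t_i>0$, and $k$ minimal. Suppose $k\ge5$. The differences $\vs_i-\vs_1$ for $2\le i\le k$ are $k-1\ge4$ vectors lying in the $3$-dimensional subspace $\vh^{\perp}$, so they are linearly dependent. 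Hence there are reals $c_i$, not all zero, with
\[
\sum_i c_i\vs_i=\vzero\mand\sum_i c_i=0.
\]
Some $c_i$ is positive. Take
\[
\mu\defeq\min\{t_i/c_i: c_i>0\},
\]
and replace each $t_i$ by $t_i-\mu c_i$. The new coefficients are nonnegative, still sum to $1$, still represent $\vv$, and at least one of them vanishes. This contradicts the minimality of $k$, so $k\le4$.
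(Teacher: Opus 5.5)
Your proof is correct and follows the paper's argument. The same convex-combination computation shows that every point of $\cG_{\vh}$ is a combination of points of $\cS$ on the supporting hyperplane, and the bound of four comes from Carath\'eodory in the three-dimensional hyperplane $\{\vh\dotp\vx=m(\vh)\}$. The differences are cosmetic: you get $\cS\cap\cG_{\vh}\ne\varnothing$ by identifying $m(\vh)$ with $\max_{\vs\in\cS}\vh\dotp\vs$, and you write out the Carath\'eodory reduction that the paper simply cites.
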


\begin{proof}
As $\cS$ is finite and nonempty, $\conv(\cS)$ is compact and
nonempty, so the maximum $m(\vh)$ is attained and
$\cG_{\vh}\ne\varnothing$. Let $\vv\in\cG_{\vh}$, and write
\[
\vv=\sum_{\vs\in\cS}\lambda_{\vs}\vs
\qquad\text{with}\quad\lambda_{\vs}\ge 0,
\quad\sum_{\vs\in\cS}\lambda_{\vs}=1.
\]
Since $\vh\dotp\vs\le m(\vh)$ for every $\vs\in\cS$, we have
\[
m(\vh)=\vh\dotp\vv=\sum_{\vs\in\cS}\lambda_{\vs}\,\vh\dotp\vs
\le m(\vh),
\]
and therefore $\vh\dotp\vs=m(\vh)$ whenever $\lambda_{\vs}>0$. Thus
every $\vs$ with $\lambda_{\vs}>0$ lies in $\cS\cap\cG_{\vh}$, which
shows that $\cS\cap\cG_{\vh}\ne\varnothing$ and that
$\vv\in\conv(\cS\cap\cG_{\vh})$. Thus,
$\cG_{\vh}\subset\conv(\cS\cap\cG_{\vh})$.
Conversely, $\cG_{\vh}$ is convex and contains $\cS\cap\cG_{\vh}$, so
$\conv(\cS\cap\cG_{\vh})\subset\cG_{\vh}$.

Finally, since $\vh\ne\vzero$, the set $\cS\cap\cG_{\vh}$ lies in
the hyperplane $\{\vx\in\R^4:\vh\dotp\vx=m(\vh)\}$, an affine space of dimension
three. By Carath\'eodory's theorem applied in that space, every point
of $\conv(\cS\cap\cG_{\vh})=\cG_{\vh}$ is a convex combination of at
most four points of $\cS\cap\cG_{\vh}$.
\end{proof}

\begin{lemma}\label{lem:nonvanish}
A nonzero polynomial $F\in\C[z_1,\ldots,z_4]$ does not vanish
identically on $\Z^4$.
\end{lemma}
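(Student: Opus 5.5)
We argue by induction on the number of variables, proving the more general statement that a nonzero polynomial $F\in\C[z_1,\ldots,z_k]$ does not vanish identically on $\Z^k$ for every $k\ge 1$. The case $k=4$ is the lemma.

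For $k=1$, a nonzero polynomial in one variable of degree $n$ has at most $n$ roots in $\C$. Since $\Z$ is infinite, $F$ cannot vanish at every integer.

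For the inductive step, suppose the statement holds for $k-1$ variables, and let $F\in\C[z_1,\ldots,z_k]$ be nonzero. Write $F$ as a polynomial in $z_k$ with coefficients in $\C[z_1,\ldots,z_{k-1}]$:
\[
F(z_1,\ldots,z_k)=\sum_{j=0}^{n}F_j(z_1,\ldots,z_{k-1})\,z_k^j,
\]
where $F_n$ is not the zero polynomial. By the inductive hypothesis there is a point $(m_1,\ldots,m_{k-1})\in\Z^{k-1}$ with $F_n(m_1,\ldots,m_{k-1})\ne0$. Then the one-variable polynomial $z\mapsto F(m_1,\ldots,m_{k-1},z)$ has nonzero leading coefficient, so it is not the zero polynomial. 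By the case $k=1$, it is nonzero at some integer $m_k$. Hence $F(m_1,\ldots,m_k)\ne 0$, which completes the induction.

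The only point needing care is the distinction between a polynomial being nonzero as a formal object and being nonzero as a function. The induction addresses exactly this, by choosing an integer point where the leading coefficient in $z_k$ does not vanish. No other step presents an obstacle. An equally short alternative is to apply the Combinatorial Nullstellensatz, or Schwartz--Zippel, on a box $\{0,\ldots,D\}^4$ with $D>\deg F$.
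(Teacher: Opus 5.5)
Your proof is correct and complete. The paper leaves this lemma as an exercise, and your induction on the number of variables, which picks an integer point where the leading coefficient in $z_k$ is nonzero and then uses the one-variable root bound, is the standard argument that fills it in; your Schwartz--Zippel alternative on a box of side greater than $\deg F$ also works.
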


\begin{proof}
Exercise.
\end{proof}

\bigskip

We now turn to the main result of this section. Given $N^4$ points
of $\C^3$ of a special shape, it produces a polynomial of small
degree that takes any prescribed values at those points. To state it,
we introduce the following notation.

For $\vfd=(\fd_1,\fd_2,\fd_3)$ with integers $\fd_1,\fd_2,\fd_3\ge 0$ let
\[
{\tt Poly}(\vfd)\defeq
\{\ipol\in\C[z_1,z_2,z_3]:\deg_{z_j}(\ipol)\le \fd_j\ (j=1,2,3)\},
\]
for which we have
\[
\dim{\tt Poly}(\vfd)=\prod_j(\fd_j+1),
\]
and for $N\ge 1$, write $\dbN\defeq\{0,1,\ldots,N-1\}$, so that
$\cube\subset\Z^4$.

\begin{proposition}[Interpolation]\label{prop:interp}
Let $\lmap:\C^4\to\C^3$ be a surjective linear map whose kernel is
$\C\vxi$, where the four coordinates of $\vxi\in\C^4$ are linearly
independent over $\Q$. Let $N\ge 1$ and let $\fd_1,\fd_2,\fd_3$ be
integers with
\be\label{eq:degcond}
\fd_j\ge 3(N-1)\quad(j=1,2,3)\mand
\prod_{j=1}^{3}\bigl(\fd_j+1-3(N-1)\bigr)>(4N-3)^4.
\ee
Then for every function $f:\cube\to\C$ there is
$\ipol\in{\tt Poly}(\vfd)$ such that
\[
\ipol(\lmap\vn)=f(\vn)\qquad(\vn\in\cube).
\]
\end{proposition}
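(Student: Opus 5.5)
We argue by duality. The evaluation map ${\tt Poly}(\vfd)\to\C^{\cube}$, $\ipol\mapsto(\ipol(\lmap\vn))_{\vn}$, is linear, so it is surjective precisely when no nonzero vector $(c_{\vn})_{\vn\in\cube}$ satisfies
\[
\sum_{\vn\in\cube}c_{\vn}\,\ipol(\lmap\vn)=0\qquad\bigl(\ipol\in{\tt Poly}(\vfd)\bigr).
\]
Suppose such a $c\ne\vzero$ exists, and let $\cS\subset\cube$ be its support. We first record that the points $\lmap\vn$ are distinct. If $\lmap\vn=\lmap\vm$, then $\vn-\vm\in\C\vxi\cap\Z^4$, and this is $\{\vzero\}$ because the coordinates of $\vxi$ are $\Q$-linearly independent. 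In the same spirit, Lemma~\ref{lem:nonvanish} should certify that any auxiliary polynomial built from $\vxi$ and $\lmap$ is not identically zero on $\Z^4$, so that it is nonzero at a suitable lattice point.

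The plan is to split the degree as $\vfd=\vfd'+3(N-1)(1,1,1)$, where $\fd'_j\defeq\fd_j-3(N-1)\ge 0$. The first part, of degree $3(N-1)$ in each variable, will be used to isolate one point of $\cS$. The second part, $\vfd'$, gives a space of dimension $\prod_j(\fd'_j+1)>(4N-3)^4$, and this will be used to absorb the remaining freedom.

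The number $(4N-3)^4$ is the number of integer points in the sumset of four copies of $\cube$, that is, in $\{0,\ldots,4N-4\}^4$. This suggests a translation argument. Since $\lmap$ is linear, $\lmap(\vn+\vm)=\lmap\vn+\lmap\vm$. Hence if $\ipol$ annihilates $c$, then so does every shifted polynomial $\vz\mapsto\ipol(\vz+\lmap\vm)$ with $\vm$ ranging over a difference set of $\cube$, provided the degrees stay in range.

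\emph{Isolation step.} Choose a vertex $\vn_0$ of $\conv(\cS)$ cut out by a face $\cG_{\vh}$, using Lemma~\ref{lem:faces}; Lemma~\ref{lem:nearest} produces a suitable $\vh$. We then want a polynomial $G$ of degree at most $3(N-1)$ in each variable with $G(\lmap\vn)=0$ for $\vn\in\cS\setminus\{\vn_0\}$ and $G(\lmap\vn_0)\ne0$. The natural candidates are products of about $N-1$ affine linear forms in each of three directions of $\C^3$. The pullback of such a form is a linear form on $\C^4$ vanishing on $\vxi$, and $N-1$ forms per direction, with three directions, account for the $3(N-1)$. Lemma~\ref{lem:faces}, through Carath\'eodory, should let us reduce a face to at most four extreme points.

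\emph{Counting step.} Multiply $G$ by $Q\in{\tt Poly}(\vfd')$. The dimension bound $\prod_j(\fd'_j+1)>(4N-3)^4$ should let us choose $Q$ so that it vanishes on all the shifted configurations we need to kill, while $Q(\lmap\vn_0)\ne0$. Then $\ipol\defeq GQ$ lies in ${\tt Poly}(\vfd)$, and pairing it with $c$ gives $c_{\vn_0}G(\lmap\vn_0)Q(\lmap\vn_0)=0$, so $c_{\vn_0}=0$. This contradicts $\vn_0\in\cS$.

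\emph{Main obstacle.} I expect the isolation step to be the main difficulty. Affine functions $\vx\mapsto\vh\dotp\vx-m$ that separate a face in $\R^4$ generally do not factor through $\lmap$, because $\vh$ need not be orthogonal to $\vxi$. Bridging this gap is where the nearest-point and face lemmas must do their work, together with the nonvanishing of $Q$ at a lattice point, which comes from Lemma~\ref{lem:nonvanish}. My first attempt would be an induction on $|\cS|$, peeling off extreme points one face at a time and choosing $Q$ by linear algebra at each stage. If that fails, I would instead use the translation invariance above to place $\vn_0$ at a corner of a box of side $4N-3$, and apply the dimension count directly.
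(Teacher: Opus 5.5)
There is a genuine gap. Your proposal names the right ingredients, but it gives them roles in which they cannot work, and the mechanism that makes the proof go through is missing.

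Your isolation step asks for $G$ with $\deg_{z_j}G\le 3(N-1)$ that vanishes at $\lmap\vn$ for every $\vn\in\cS\setminus\{\vn_0\}$ but not at $\lmap\vn_0$. Such a $G$ does not exist in general. If it did, $\ipol=G$ alone would already contradict the annihilation relation, so the factor from ${\tt Poly}(\vfd')$ would be superfluous. Applying the same reasoning to an annihilator of the smaller space ${\tt Poly}(3(N-1),3(N-1),3(N-1))$ would make that space's evaluation map onto $\C^{\cube}$ surjective. This is impossible once $(3N-2)^3<N^4$.

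Your counting step has the complementary problem. A dimension count produces a nonzero $Q$ with prescribed zeros, but gives no control over $Q(\lmap\vn_0)$. Forcing a nonzero value at a prescribed point is the interpolation problem itself. Your fallback ideas (peeling extreme points, or moving $\vn_0$ to a corner of a box) are not developed far enough to address either difficulty.

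The paper divides the work differently. By the dimension count, it takes $F\in{\tt Poly}(\vfd')\setminus\{0\}$ vanishing on $\lmap(4\Omega\cap\Z^4)$, where $\Omega=\conv(\cS)$, without prescribing where $F$ is nonzero. Lemma~\ref{lem:nonvanish} gives a lattice point at which $F\circ\lmap\ne0$, and the paper takes such a point $\vm$ \emph{nearest} to $4\Omega$. Let $\vy$ be the nearest point of $4\Omega$ to $\vm$ and $\vh=\vm-\vy$. Then the face $\cG_{\vh}$ of $\Omega$ contains $\tfrac14\vy$, and Carath\'eodory yields $\vu\in\cS\cap\cG_{\vh}$ with $\vy-\vu\in3\Omega$. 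This is where the dilation factor $4$, and hence the bound $(4N-3)^4$, comes from. Note that $\vu$ is determined by $\vm$ rather than chosen in advance as a vertex.

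Your translation remark is the right tool, but the shift must be $\lmap(\vm-\vu)$. For $\vn\in\cS\setminus\cG_{\vh}$, the lattice point $\vm-\vu+\vn$ is strictly closer to $4\Omega$ than $\vm$. Hence $F(\vz+\lmap(\vm-\vu))$ vanishes at $\lmap\vn$, either by construction or by minimality, while its value at $\lmap\vu$ is $F(\lmap\vm)\ne0$.

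Only the points of $\cS$ on the face remain. Here the obstacle you correctly identified is avoided by not using $\vh$ at all. The face lies in a hyperplane $\vr\dotp\vw=\nu$ with $\vr\in\Z^4\setminus\{\vzero\}$. Since $\vr\dotp\vxi\ne0$, the map $\lmap$ is injective on that hyperplane, so the coordinates $n_j$ are affine functions of $\lmap\vn$ there. A Lagrange product in three of the four coordinates then gives $G$ of total degree at most $3(N-1)$ isolating $\vu$ within the face. The product $F(\vz+\lmap(\vm-\vu))\,G(\vz)$ yields the contradiction.
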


In other words, for the given map $\lmap$, an arbitrary function $f$
on $\cube$ factors through $\lmap$ by way of a polynomial
$\ipol\in{\tt Poly}(\vfd)$, so that the following diagram commutes.
\[
\begin{tikzcd}[column sep=large,row sep=large]
\cube \arrow[r,hook] \arrow[drr,"f"',bend right=15]
& \C^4 \arrow[r,"\lmap"]
& \C^3 \arrow[d,"\ipol\,\in\,{\tt Poly}(\vfd)"] \\
& & \C
\end{tikzcd}
\]

Note that the hypothesis on $\vxi$ implies that $\lmap$ is injective on
$\Z^4$. Indeed, if $\lmap\vn=\lmap\vn'$, then
$\vn-\vn'=c\,\vxi$ for some $c\in\C$. If $c\ne 0$, then the
coordinates of $\vxi$ all lie in $c^{-1}\Z$, so they are linearly
dependent over $\Q$, which is not the case; thus, $\vn-\vn'=\vzero$.
In particular, the $N^4$ points $\lmap\vn$ with $\vn\in\cube$ are distinct.

The strength of Proposition~\ref{prop:interp} lies in the degrees. The condition
\eqref{eq:degcond} involves neither $\lmap$ nor $\vxi$, and
it is satisfied by various spaces ${\tt Poly}(\vfd)$ of dimension $O(N^4)$, which
is the least possible order since $\C^{\cube}\defeq\{f:\cube\to\C\}$
has dimension $N^4$.

\medskip

\begin{proof}[Proof of Proposition~\ref{prop:interp}]
Suppose on the contrary that the evaluation map
\[
{\tt Poly}(\vfd)\to\C^{\cube},\qquad
\ipol\mapsto\bigl(\ipol(\lmap\vn)\bigr)_{\vn},
\]
is not surjective. Since its image is then a proper
subspace, there exists a nonzero linear functional that vanishes on it.
In other words, for some $a_{\vn}\in\C$, not all zero,
\be\label{eq:annih}
\sum_{\vn\in \cube}a_{\vn}\,\ipol(\lmap\vn)
=0\qquad\text{for all $\ipol\in{\tt Poly}(\vfd)$.}
\ee
Thus, our aim is to construct a polynomial
$\ipol\in{\tt Poly}(\vfd)$ violating \eqref{eq:annih}.

Let $\cS$ be the finite set
\[
\cS\defeq\{\vn\in \cube:a_{\vn}\ne 0\}\ne\varnothing,
\]
and put
\[
\Omega\defeq\conv(\cS)\subset[0,N-1]^4,\qquad
\Omega^\sharp\defeq 4\Omega=\{4\vz:\vz\in \Omega\}.
\]

\medskip\noindent{\sc Step 1: \it an auxiliary polynomial
vanishing on $\lmap(\Omega^\sharp\cap\Z^4)$.}
Denote
\[
\fd'_j\defeq\fd_j-3(N-1)\ge 0
\mand
\vfd'\defeq(\fd'_1,\fd'_2,\fd'_3).
\]
Since
$\Omega^\sharp\subset[0,4(N-1)]^4$, we have by \eqref{eq:degcond}
\[
|\Omega^\sharp\cap\Z^4|\le(4N-3)^4<\prod_{j=1}^3(\fd'_j+1)=\dim{\tt Poly}(\vfd').
\]
The evaluation map
${\tt Poly}(\vfd')\to\C^{\Omega^\sharp\cap\Z^4}$ given by
$F\mapsto\bigl(F(\lmap\vn)\bigr)_{\vn}$ is linear, and the dimension
of its domain exceeds that of its target. The kernel is therefore
nonzero, i.e., there exists
\be\label{eq:aux}
F\in{\tt Poly}(\vfd')\setminus\{0\}\quad\text{with}\quad F(\lmap\vn)=0
\quad\text{for all $\vn\in \Omega^\sharp\cap\Z^4$.}
\ee

\medskip\noindent{\sc Step 2: \it the nearest lattice point
where $F\circ \lmap$ does not vanish.}
Since $\lmap$ is surjective, $F\circ \lmap$ is a nonzero polynomial on $\C^4$
(choose $\vz\in\C^3$ with $F(\vz)\ne 0$ and $\vx\in\C^4$ with $\lmap\vx=\vz$). By
Lemma~\ref{lem:nonvanish} the set $\{\vn\in\Z^4:F(\lmap\vn)\ne 0\}$ is
nonempty. Any large ball contains some of these lattice points, but
only finitely many, so among these choose $\vm$ for which
$\dist(\vm,\Omega^\sharp)$ is minimal. By
\eqref{eq:aux}, $\vm\notin \Omega^\sharp$. Let $\vy\in \Omega^\sharp$
be nearest to $\vm$ and put $\vh\defeq\vm-\vy\ne\vzero$, so that
$\dist(\vm,\Omega^\sharp)=\|\vh\|$. By Lemma~\ref{lem:nearest}, we have
\be\label{eq:nearest}
\vh\dotp(\vx-\vy)\le 0\qquad(\vx\in \Omega^\sharp).
\ee

\medskip\noindent{\sc Step 3: \it a supporting face and a
distinguished point $\vu$.}
Let $m(\vh)$ and $\cG_{\vh}$ be defined as in Lemma~\ref{lem:faces};
since
$\Omega=\conv(\cS)$, this means that
\[
m(\vh)=\max_{\vv\in \Omega}\vh\dotp\vv
\mand
\cG_{\vh}=\bigl\{\vv\in \Omega:\vh\dotp\vv=m(\vh)\bigr\}.
\]
Applying \eqref{eq:nearest} with $\vx=4\vv$, $\vv\in \Omega$, gives
$\vh\dotp\vv\le \vh\dotp(\tfrac14\vy)$; as $\tfrac14\vy\in \Omega$
this shows $\tfrac14\vy\in \cG_{\vh}$. By Lemma~\ref{lem:faces},
\[
\tfrac14\vy=\sum_{j=1}^{k}\lambda_j\vu_j,\quad\text{with some}\quad
k\le 4,\quad\vu_j\in \cS\cap \cG_{\vh},
\quad\lambda_j>0,\quad\sum_{j=1}^{k}\lambda_j=1.
\]
\begin{figure}[ht]
\centering
\begin{tikzpicture}[scale=0.45,>=stealth]
\filldraw[fill=black!7,draw=black]
  (4,4)--(12,4)--(12,8)--(8,12)--(4,8)--cycle;
\filldraw[fill=black!22,draw=black]
  (1,1)--(3,1)--(3,2)--(2,3)--(1,2)--cycle;
\foreach \i in {0,...,14}
  \foreach \j in {0,...,13}
    \fill[black!35] (\i,\j) circle (1.3pt);
\foreach \p in {(1,1),(3,1),(3,2),(2,3),(1,2),(2,2)}
  \fill[black] \p circle (2.6pt);
\draw[black,line width=1.6pt] (3,2)--(2,3);
\draw[black,densely dashed] (0,0)--(10.5,9.5);
\draw[black,->,thick,shorten >=2.5pt] (10.5,9.5)--(12,11);
\fill[black] (12,11) circle (2.6pt);
\filldraw[fill=white,draw=black,thick] (10.5,9.5) circle (2.6pt);
\filldraw[fill=white,draw=black,thick] (2.625,2.375) circle (2.6pt);
\node[black,right=2pt] at (12,11) {$\vm$};
\node[black,above left=-1pt] at (11.25,10.25) {$\vh$};
\node[black,below right=-1pt] at (10.75,10.0) {$\vy$};
\node[black,right=3pt] at (2.625,2.3) {$\tfrac14\vy$};
\node[black,below left=-1pt] at (0,0) {$\vzero$};
\node[black] at (1.5,0.45) {$\Omega$};
\node[black] at (9.5,5.5) {$\Omega^\sharp$};
\end{tikzpicture}
\end{figure}

For some $j$ we have $\lambda_j\ge 1/k\ge 1/4$; call the corresponding point
$\vu$. Then
\be\label{eq:upoint}
\vu\in \cS\cap \cG_{\vh},\qquad \vy-\vu\in 3\Omega.
\ee
Indeed, $\vy-\vu=\sum_j 4\lambda_j\vu_j-\vu$ is a linear combination of the
$\vu_j\in \Omega$ with coefficients $4\lambda_j$ (and $4\lambda_j-1\ge 0$
for the index of $\vu$) that are all nonnegative and sum to three,
and $\Omega$ is convex.

\medskip\noindent{\sc Step 4: \it $F\circ \lmap$ vanishes
at $\vm-\vu+\vn$ for every $\vn\in \cS\setminus \cG_{\vh}$.}
Fix $\vn\in \cS\setminus \cG_{\vh}$ and put
\[
\vy^\sharp\defeq\vy-\vu+\vn,\qquad
\vm^\sharp\defeq\vm-\vu+\vn=\vy^\sharp+\vh\in\Z^4.
\]
By \eqref{eq:upoint},
$\vy^\sharp\in 3\Omega+\Omega=4\Omega=\Omega^\sharp$
(convexity of $\Omega$). As $\vu\in \cG_{\vh}$ and
$\vn\in \Omega\setminus \cG_{\vh}$, we have
\[
\vh\dotp\vu=m(\vh),\qquad
\vh\dotp\vn<m(\vh)\qquad\Longrightarrow\qquad
\vh\dotp(\vu-\vn)>0.
\]
For $0<\eta\le 1$ the point
$\vy^\sharp+\eta(\vy-\vy^\sharp)$ lies in $\Omega^\sharp$. Since
$\vy-\vy^\sharp=\vu-\vn$ and $\vm^\sharp-\vy^\sharp=\vh$,
for all sufficiently small $\eta$ we have
\dalign{
\bigl\|\vm^\sharp-\{\vy^\sharp+\eta(\vy-\vy^\sharp)\}\bigr\|^2
&=\|\vh-\eta(\vu-\vn)\|^2\\
&=\|\vh\|^2-2\eta\,\vh\dotp(\vu-\vn)+\eta^2\|\vu-\vn\|^2<\|\vh\|^2.
}
Thus,
$\dist(\vm^\sharp,\Omega^\sharp)<\|\vh\|=\dist(\vm,\Omega^\sharp)$. By
the minimal choice of $\vm$ (and by \eqref{eq:aux} in the case that
$\vm^\sharp\in \Omega^\sharp$), we conclude that $F(\lmap\vm^\sharp)=0$.
We have therefore shown that
\be\label{eq:vanish}
F\bigl(\lmap(\vm-\vu+\vn)\bigr)=0\qquad(\vn\in \cS\setminus
\cG_{\vh}).
\ee

\medskip\noindent{\sc Step 5: \it a polynomial of degree $\le 3(N-1)$
isolating $\vu$
within $\cS\cap \cG_{\vh}$.}
The vectors $\vs-\vu$, $\vs\in \cS\cap \cG_{\vh}$, are integer vectors
orthogonal to $\vh\ne\vzero$, so they span a real subspace of
dimension $\le 3$. The rank of an integer matrix is the same over $\Q$
and over $\R$, so their $\Q$-span $W$ inside $\Q^4$ has dimension
$\le 3$. Thus, there is a nonzero $\vr\in\Z^4$ with $\vr\dotp\vw=0$
for all $\vw\in W$. Put $\nu\defeq\vr\dotp\vu\in\Z$; then
\[
\cS\cap \cG_{\vh}\subset V\defeq\{\vw\in\C^4:\vr\dotp\vw=\nu\},
\]
where the dot product is extended to $\C^4$ by the same
formula, $\vr\dotp\vw\defeq r_1w_1+\cdots+r_4w_4$. Let
\[
V_0\defeq\{\vw\in\C^4:\vr\dotp\vw=0\},
\]
and note that $V=\vu+V_0$ since $\vr\dotp\vu=\nu$.

Because $\vr$ is a nonzero integer vector, and the coordinates of
$\vxi$ are linearly independent over $\Q$, $\vr\dotp\vxi\ne 0$. Hence
the linear space meets $\ker \lmap=\C\vxi$ only in $\vzero$. The restriction
$\lmap|_{V_0}:V_0\to\C^3$ is therefore an injective linear map
between two spaces of dimension three, so it is bijective. We define
\[
\vpsi(\vz)\defeq\vu+\bigl(\lmap|_{V_0}\bigr)^{-1}(\vz-\lmap\vu)
\qquad(\vz\in\C^3).
\]
The map $\vpsi$ takes values in $V=\vu+V_0$, and its coordinates
$\psi_1,\ldots,\psi_4$ are polynomials of degree $\le 1$ in
$\vz=(z_1,z_2,z_3)$. For every $\vn\in V$ we have $\vn-\vu\in V_0$,
and therefore $\vpsi(\lmap\vn)=\vu+(\vn-\vu)=\vn$. In coordinates,
\be\label{eq:phi}
\psi_j(\lmap\vn)=n_j\qquad(\vn\in V,\ 1\le j\le 4).
\ee
Fix any $j_0$ with $r_{j_0}\ne 0$ and define
\[
G(\vz)\defeq\prod_{j\ne j_0}\ \sprod{0\le k<N\\ k\ne u_j}
\frac{\psi_j(\vz)-k}{u_j-k}.
\]
This is a polynomial of total degree at most $3(N-1)$, and the
denominators are all nonzero integers. Every factor equals one at
$\vz=\lmap\vu$ by \eqref{eq:phi}.
If $\vs\in \cS\cap \cG_{\vh}$ and $\vs\ne \vu$, then $s_j\ne u_j$ for some
$j$ \emph{different from} $j_0$,
for otherwise $0=\vr\dotp(\vs-\vu)=r_{j_0}(s_{j_0}-u_{j_0})$
would force $\vs=\vu$. For such $j$, the factor with
$s_j=k\in\{0,\ldots,N-1\}\setminus\{u_j\}$ vanishes at $\vz=\lmap\vs$
by
\eqref{eq:phi}. Hence
\be\label{eq:G}
G(\lmap\vu)=1,\qquad
G(\lmap\vs)=0\quad(\vs\in(\cS\cap \cG_{\vh})\setminus\{\vu\}),
\ee
and
\be\label{eq:degG}
\deg_{z_j}(G)\le 3(N-1)\qquad (j=1,2,3).
\ee

\medskip\noindent{\sc Step 6: \it the contradiction.}
Let $\ipol(\vz)\defeq F\bigl(\vz+\lmap(\vm-\vu)\bigr)\,G(\vz)$. For each
$j$, the first factor has degree at most $\fd'_j$ in $z_j$ by
\eqref{eq:aux}, since a translation of the variables does not
increase the degree in any single variable, and $G$ has degree at
most $3(N-1)$ in $z_j$ by \eqref{eq:degG}. Therefore
\[
\deg_{z_j}(\ipol)\le\fd'_j+3(N-1)=\fd_j\qquad(j=1,2,3),
\]
which shows that $\ipol\in{\tt Poly}(\vfd)$. For $\vn\in \cS$
we have $\ipol(\lmap\vn)=G(\lmap\vn)\,F(\lmap(\vm-\vu+\vn))$, which
vanishes for $\vn\in \cS\setminus \cG_{\vh}$ by \eqref{eq:vanish}, vanishes for
$\vn\in(\cS\cap \cG_{\vh})\setminus\{\vu\}$ by \eqref{eq:G}, and equals
$F(\lmap\vm)$ for $\vn=\vu$. Since $a_{\vn}=0$ for all $\vn\notin\cS$,
\eqref{eq:annih} gives
\[
0=\sum_{\vn\in \cS}a_{\vn} \ipol(\lmap\vn)=a_{\vu}\,F(\lmap\vm)\ne 0,
\]
a contradiction.
\end{proof}

\begin{corollary}\label{cor:box}
Let $\lmap$ be as in Proposition~\ref{prop:interp}, let $1\le H\le N$ be
integers, and put
\[
M\defeq N^{4/3},\qquad T_1\defeq 32H^{2/3}M,\qquad T_2=T_3\defeq
32H^{-1/3}M.
\]
Then the vectors
\[
\bigl((\lmap\vn)^{\valpha}\bigr)_{\vn\in\cube}
\qquad(\valpha\in\Z_{\ge 0}^3,\ \alpha_j\le T_j\text{~for~}j=1,2,3)
\]
span the space $\C^{\cube}$, where we define
$\vz^{\valpha}\defeq z_1^{\alpha_1}z_2^{\alpha_2}z_3^{\alpha_3}$
for all $\vz\in\C^3$.
\end{corollary}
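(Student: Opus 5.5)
The plan is to apply Proposition~\ref{prop:interp} with the integer degrees $\fd_j\defeq\lfloor T_j\rfloor$ for $j=1,2,3$, and then to read off the spanning statement from the interpolation property. Suppose the hypotheses \eqref{eq:degcond} hold for these $\fd_j$. Then, for every $f\in\C^{\cube}$, there is $\ipol\in{\tt Poly}(\vfd)$ with $\ipol(\lmap\vn)=f(\vn)$ for all $\vn\in\cube$. Write $\ipol=\sum_{\valpha}c_{\valpha}\vz^{\valpha}$, where the sum runs over $\alpha_j\le\fd_j\le T_j$. This gives
\[
f=\sum_{\valpha}c_{\valpha}\bigl((\lmap\vn)^{\valpha}\bigr)_{\vn\in\cube},
\]
so $f$ lies in the span of the stated vectors. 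Since $f$ is arbitrary, these vectors span $\C^{\cube}$. The whole task therefore reduces to checking \eqref{eq:degcond}.

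\textbf{First condition.} Since $1\le H\le N$ and $M=N^{4/3}$, we have
\[
T_2=T_3=32H^{-1/3}N^{4/3}\ge 32N^{-1/3}N^{4/3}=32N
\mand
T_1=32H^{2/3}M\ge 32N^{4/3}\ge 32N.
\]
So each $T_j\ge 32N$. Hence $\fd_j\ge 32N-1\ge 3(N-1)$, which is the first condition.

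\textbf{Second condition.} Use $\fd_j+1>T_j$ together with $3(N-1)<3N\le\tfrac{3}{32}T_j$. This gives
\[
\fd_j+1-3(N-1)>T_j-\tfrac{3}{32}T_j=\tfrac{29}{32}T_j.
\]
The powers of $H$ cancel in the product: $T_1T_2T_3=32^3H^{2/3-1/3-1/3}M^3=32^3N^4$. Therefore
\[
\prod_{j=1}^3\bigl(\fd_j+1-3(N-1)\bigr)>\Bigl(\tfrac{29}{32}\Bigr)^3\,32^3N^4=29^3N^4>256N^4\ge(4N-3)^4,
\]
which is the second condition.

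The only point needing care is this last inequality. It rests on the cancellation of the $H$-exponents in $T_1T_2T_3$ and on the lower bound $T_j\ge 32N$, which absorbs the $3(N-1)$ shifts. Both follow directly from $H\le N$, so I do not expect a genuine obstacle.
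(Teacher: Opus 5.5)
Your proposal is correct and follows the paper's proof. You take $\fd_j=\lfloor T_j\rfloor$, verify \eqref{eq:degcond} using $T_j\ge 32N$ and the cancellation $T_1T_2T_3=32^3N^4$, and expand the interpolating polynomial in monomials. The only difference is cosmetic: you bound $\fd_j+1-3(N-1)$ below by $\tfrac{29}{32}T_j$ where the paper uses $\tfrac12T_j$, and both bounds suffice.
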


\begin{proof}
Put $\fd_j\defeq\lfloor T_j\rfloor$. Since $H\le N$, we have
\[
T_2=T_3=32N(N/H)^{1/3}\ge 32N
\mand T_1\ge 32N^{4/3}\ge 32N.
\]
Hence $\fd_j\ge 3(N-1)$ and $T_j-3(N-1)\ge T_j/2$ for each $j$, so
\[
\prod_j\bigl(\fd_j+1-3(N-1)\bigr)>\prod_j\bigl(T_j-3(N-1)\bigr)
\ge\tfrac18T_1T_2T_3=4096N^4>(4N-3)^4.
\]
Thus, \eqref{eq:degcond} holds, and by
Proposition~\ref{prop:interp} every $f\in\C^{\cube}$ has the form
$f(\vn)=\ipol(\lmap\vn)$ for some $\ipol\in{\tt Poly}(\vfd)$. The
monomials $z_1^{\alpha_1}z_2^{\alpha_2}z_3^{\alpha_3}$ with
$\valpha\in\Z_{\ge 0}^3$ and $\alpha_j\le\fd_j$ form a basis of
${\tt Poly}(\vfd)$, so $f$ is a linear combination of the vectors
in the statement with $\alpha_j\le\fd_j$.
\end{proof}

\section{The field, the points, and the weighted determinant}
\label{sec:determinant}

\subsection{The biquadratic ring}

Assume that $q\ge 9$, and let $d\ne 1,2$ be the squarefree integer
of \eqref{eq:field}. Fix $a,b\in\C$ with $a^2=d$, $b^2=2$, and put
\[
K\defeq\Q(a,b),\qquad\cO\defeq\Z[a,b]
=\Z\oplus\Z a\oplus\Z b\oplus\Z ab.
\]
Since $d$, $2$, and $2d$ are not rational squares, $[K:\Q]=4$ and
$1,a,b,ab$ is a $\Q$-basis of $K$; in particular the expression of an
element of $\cO$ in this $\Z$-basis is unique. The extension $K/\Q$ is
Galois with group $\{1,\sigma,\tau,\sigma\tau\}$, where
\[
\sigma(a)=-a,\quad\sigma(b)=b;\qquad\tau(a)=a,\quad\tau(b)=-b.
\]
The four field embeddings $K\hookrightarrow\C$ are the four elements
of the group. For $z\in K$ let
$\Nm(z)\defeq z\,\sigma(z)\,\tau(z)\,\sigma\tau(z)$.
Clearly, $\Nm(\cO)\subset\Z$.

\subsection{The points}

Fix integers $1\le H\le N$ and set
\[
M\defeq N^{4/3},\qquad\text{so that}\quad M^3=N^4=\bigl|\cube\bigr|.
\]
Define
\be\label{eq:thevndefn}
\theta_{\vn}\defeq n_1+n_2a+n_3b+n_4ab\in\cO\qquad(\vn\in \cube).
\ee
Then
\[
\bigl(\theta_{\vn},\sigma(\theta_{\vn}),\sigma\tau(\theta_{\vn})\bigr)=\lmap\vn,
\qquad\text{where}\quad
\lmap\defeq\begin{pmatrix}
1 & a & b & ab\\
1 & -a & b & -ab\\
1 & -a & -b & ab
\end{pmatrix}.
\]
The first three columns of $\lmap$ form a matrix of determinant
$4ab\ne 0$, so $\lmap$ is surjective, and one checks directly that
$\lmap\vxi=\vzero$ for $\vxi\defeq(ab,b,-a,-1)$.
Thus, $\ker\lmap=\C\vxi$, and the coordinates
of $\vxi$ are linearly independent over $\Q$, so
Proposition~\ref{prop:interp} and
Corollary~\ref{cor:box} can be applied to $\lmap$.

\subsection{Rows, weights, and the greedy selection}

For $\valpha\in\Z_{\ge 0}^3$ we define the row vector
\[
\row_{\valpha}\defeq\bigl((\lmap\vn)^{\valpha}\bigr)_{\vn\in\cube}
=\bigl(\theta_{\vn}^{\alpha_1}\,\sigma(\theta_{\vn})^{\alpha_2}\,
\sigma\tau(\theta_{\vn})^{\alpha_3}\bigr)_{\vn\in \cube}\in\cO^{\cube}.
\]
These are precisely the vectors of Corollary~\ref{cor:box} for the
present map $\lmap$. We also define the weight
\[
w(\valpha)\defeq\alpha_1+H\alpha_2+H\alpha_3.
\]
Fix a total order $\prec$ on $\Z_{\ge 0}^3$ such that
$w(\vbeta)<w(\valpha)$ implies $\vbeta\prec\valpha$, and ties are broken
by any fixed rule. Each weight is attained finitely often, so this
order has the same order type as $\N$.

\begin{definition}\label{def:greedy}
Progress through the indices in the order $\prec$ and retain $\valpha$ if
and only if $\row_{\valpha}$ is not in the $K$-span of
$\{\row_{\vbeta}:\vbeta\prec\valpha\}$.
Let $\cA$ denote the set of retained indices.
\end{definition}

\begin{lemma}\label{lem:greedy}
The following statements hold.
\begin{itemize}
\item[$(a)$] For every $\valpha\in\Z_{\ge 0}^3$ (retained or not),
\[
\spn_K\{\row_{\vbeta}:\vbeta\in\Z_{\ge 0}^3,\ \vbeta\preceq\valpha\}
=\spn_K\{\row_{\vbeta}:\vbeta\in\cA,\ \vbeta\preceq\valpha\}.
\]
\item[$(b)$] The rows $\row_{\valpha}$ with $\valpha\in\cA$ form a
$K$-basis of $K^{\cube}$, and thus $|\cA|=N^4$. Moreover, every
$\valpha\in\cA$ satisfies
\[
w(\valpha)\le 96H^{2/3}M
\mand
\alpha_2,\alpha_3\le 96H^{-1/3}M.
\]
\end{itemize}
\end{lemma}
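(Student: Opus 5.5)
We would prove $(a)$ by induction along $\prec$, then deduce $(b)$ from $(a)$ together with Corollary~\ref{cor:box}, taking care to pass from spanning over $\C$ to spanning over $K$.

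\emph{Part $(a)$.} Since $\prec$ has the order type of $\N$, we argue by induction on the position of $\valpha$. The inclusion $\supseteq$ is trivial. For $\subseteq$, assume the equality holds for every $\vbeta\prec\valpha$, so that the $K$-span of all $\row_{\vbeta}$ with $\vbeta\prec\valpha$ equals the $K$-span of the retained ones with $\vbeta\prec\valpha$.
\begin{itemize}
\item If $\valpha\in\cA$, we adjoin $\row_{\valpha}$ to both sides.
\item If $\valpha\notin\cA$, then by Definition~\ref{def:greedy} $\row_{\valpha}$ already lies in the span of the earlier rows. By the induction hypothesis this equals the span of the earlier retained rows, so adding $\valpha$ changes neither side.
\end{itemize}
For the base case (the $\prec$-least index), note that $\row_{\vzero}$ is the all-ones vector, which is nonzero, so it is retained.

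\emph{Linear independence.} Suppose $\sum_{\valpha\in\cA}c_{\valpha}\row_{\valpha}=0$ is a nontrivial finite relation. Let $\valpha$ be the $\prec$-largest index with $c_{\valpha}\ne0$. Then $\row_{\valpha}$ lies in the $K$-span of earlier rows, which contradicts $\valpha\in\cA$.

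\emph{Spanning.} Let $B$ be the finite box $\{\valpha:\alpha_j\le T_j\}$ of Corollary~\ref{cor:box}. That corollary says the rows $\row_{\valpha}$, $\valpha\in B$, span $\C^{\cube}$ over $\C$. These rows have entries in $\cO\subset K$, so they form a matrix over $K$. Its rank is the same over $K$ and over $\C$ (rank is detected by nonvanishing minors), hence equals $N^4$, and the rows span $K^{\cube}$ over $K$.

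Let $\valpha^*$ be the $\prec$-maximum of $B$. By $(a)$, the retained rows with $\vbeta\preceq\valpha^*$ span everything the rows indexed by $B$ span, namely $K^{\cube}$. Combined with independence:
\begin{itemize}
\item the retained rows form a $K$-basis of $K^{\cube}$, so $|\cA|=N^4$;
\item no $\valpha\succ\valpha^*$ can be retained, since $\row_{\valpha}$ would already lie in the span $K^{\cube}$ of earlier rows.
\end{itemize}
Thus every $\valpha\in\cA$ satisfies $\valpha\preceq\valpha^*$.

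\emph{Bounds.} Because $w(\valpha)>w(\valpha^*)$ would force $\valpha^*\prec\valpha$, we get $w(\valpha)\le w(\valpha^*)$ for every $\valpha\in\cA$. Every element of $B$ has weight at most
\[
T_1+2HT_2=32H^{2/3}M+64H^{2/3}M=96H^{2/3}M,
\]
and $\valpha^*\in B$, so $w(\valpha)\le 96H^{2/3}M$. Finally, since all coordinates are nonnegative,
\[
H\alpha_2\le w(\valpha)\le 96H^{2/3}M,
\]
so $\alpha_2\le 96H^{-1/3}M$, and the same argument gives the bound for $\alpha_3$.

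The only step needing care is the passage from spanning over $\C$ to spanning over $K$. Invariance of rank under field extension handles it, and the rest is bookkeeping.
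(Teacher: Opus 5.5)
Your proof is correct and follows essentially the same route as the paper. You prove $(a)$ by induction along $\prec$, obtain spanning of $K^{\cube}$ from Corollary~\ref{cor:box} (passing from $\C$ to $K$ via invariance of rank), and read off the weight bound from the box; the only difference is that you cut off at the $\prec$-maximum of the box rather than at the initial segment of indices of weight at most $96H^{2/3}M$, which amounts to the same thing.
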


\begin{proof}
Part $(a)$ follows by induction along $\prec$, since a row that is
not retained lies in the $K$-span of the earlier rows.

For $(b)$, the retained rows are linearly independent over $K$ by
Definition~\ref{def:greedy}, so $|\cA|\le N^4$. Let $T_1,T_2,T_3$ be
as in Corollary~\ref{cor:box}. By that corollary, the rows
$\row_{\valpha}$ with $\alpha_j\le T_j$ for each $j$ span $\C^{\cube}$,
hence also $K^{\cube}$, since their entries lie in $K$. Each of them
satisfies
\[
w(\valpha)\le T_1+HT_2+HT_3=96H^{2/3}M.
\]
By the choice of $\prec$, the indices of weight at most
$96H^{2/3}M$ form an initial segment of the order, so by $(a)$ the
retained rows with such indices span $K^{\cube}$. Hence there are
$N^4$ of them, and no index of larger weight is retained. The bounds on
$\alpha_2$ and $\alpha_3$ follow from $H\alpha_2\le w(\valpha)$ and
$H\alpha_3\le w(\valpha)$.
\end{proof}

List $\cA$ in the order $\prec$, fix an ordering of $\cube$ for the
columns, and define
\[
\Delta\defeq\det\bigl(\row_{\valpha}\bigr)_{\valpha\in\cA}\in\cO\setminus\{0\},
\qquad S_1\defeq\sum_{\valpha\in\cA}\alpha_1,\qquad
S_{23}\defeq\sum_{\valpha\in\cA}(\alpha_2+\alpha_3).
\]
Note that $\Delta$ is formed from the monomial rows $\row_{\valpha}$
alone. The auxiliary polynomials in the proof of
Proposition~\ref{prop:interp} served only to show that these rows
span $K^{\cube}$, and no bound on their coefficients is needed in what
follows.

\begin{lemma}[Most of the degree is in the first variable]
\label{lem:degree}
If
\[
1\le H\le N
\mand
H^{2/3}N^{4/3}\ge 2\cdot 97^2,
\] then
\[
S_1\ge (4\cdot 97^2)^{-1}N^4H^{2/3}M\mand S_{23}\le 192N^4H^{-1/3}M.
\]
\end{lemma}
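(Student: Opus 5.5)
The bound for $S_{23}$ is immediate, and I will dispose of it first. By Lemma~\ref{lem:greedy}(b), $|\cA|=N^4$ and every $\valpha\in\cA$ satisfies $\alpha_2,\alpha_3\le 96H^{-1/3}M$. Summing $\alpha_2+\alpha_3\le 192H^{-1/3}M$ over the $N^4$ retained indices gives $S_{23}\le 192N^4H^{-1/3}M$.

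For $S_1$ the plan is a counting argument: the retained indices are $N^4$ distinct lattice points confined to a thin slab in the $(\alpha_2,\alpha_3)$ directions, so many of them must have large $\alpha_1$.

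\emph{Layer size.} Put $B\defeq 96H^{-1/3}M$. Since $H\le N$, we have $H^{-1/3}M=N(N/H)^{1/3}\ge 1$, hence $B+1\le 97H^{-1/3}M$. For each integer $k\ge 0$, the number of $\valpha\in\cA$ with $\alpha_1=k$ is therefore at most
\[
L\defeq 97^2H^{-2/3}M^2 .
\]
Consequently, for every $k\ge 1$ the number of $\valpha\in\cA$ with $\alpha_1\ge k$ is at least $N^4-kL$.

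\emph{Summing over layers.} Write $S_1=\sum_{k\ge1}\#\{\valpha\in\cA:\alpha_1\ge k\}$. Set
\[
R\defeq N^4/L=H^{2/3}M/97^2 ,
\]
which is $\ge 2$ by the hypothesis $H^{2/3}N^{4/3}\ge 2\cdot 97^2$. Let $m\defeq\lfloor R\rfloor\ge 2$. Then
\[
S_1\ge\sum_{k=1}^{m}(N^4-kL)=L\Bigl(mR-\tfrac{m(m+1)}{2}\Bigr)=\tfrac{L}{2}\,m(2R-m-1)\ge \tfrac{L}{2}\,m(R-1).
\]
It remains to check that $m(R-1)\ge R^2/2$, since then $S_1\ge LR^2/4=N^4R/4=(4\cdot97^2)^{-1}N^4H^{2/3}M$, which is the claim.

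\emph{The elementary inequality.} This final bookkeeping step is the only delicate point, because crude estimates lose a factor of $2$. I would split into cases:
\begin{itemize}
\item[$(i)$] If $2\le R<3$, then $m=2$ and $2(R-1)\ge R^2/2$ is equivalent to $(R-2)^2\le 0$. This holds only at $R=2$, so on this range I would instead use the exact expression $m(2R-m-1)=2(2R-3)\ge R^2/2$, which holds on $[2,3)$.
\item[$(ii)$] If $R\ge 3$, then $m>R-1$ gives $m(2R-m-1)\ge\min\bigl((R-1)R,\ R(R-1)\bigr)$, using that $x(2R-x-1)$ is concave in $x$ on $[R-1,R]$. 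This is $\ge R^2/2$ for $R\ge 2$.
\end{itemize}
I expect to handle this with the exact quantity $m(2R-m-1)$ throughout, rather than with its lower bound $m(R-1)$.
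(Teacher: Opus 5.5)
Your proof is correct and is essentially the paper's argument. Both bound the number of retained indices in each layer $\alpha_1=k$ by about $97^2H^{-2/3}M^2$ and then sum, with the hypothesis $R\ge 2$ absorbing the loss in the same way:
\begin{itemize}
\item the paper sorts the first coordinates and evaluates $\sum_j\lfloor (j-1)/B^2\rfloor$ exactly, with an integer $B$;
\item you use the tail sum $S_1=\sum_{k\ge1}\#\{\alpha_1\ge k\}$, truncated at $m=\lfloor R\rfloor$.
\end{itemize}
Your case split is unnecessary. Since $m=\lfloor R\rfloor\in(R-1,R]$ always, the concavity bound $m(2R-m-1)\ge R(R-1)\ge R^2/2$ from case $(ii)$ already covers every $R\ge 2$.
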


\begin{proof}
Let
\[
B\defeq\lfloor 96H^{-1/3}M\rfloor+1.
\]
By Lemma~\ref{lem:greedy}$(b)$, the integers $\alpha_2,\alpha_3$
satisfy the bound $\alpha_2,\alpha_3\le B-1$ for all $\valpha\in\cA$, and
thus $S_{23}\le 2(B-1)N^4\le 192N^4H^{-1/3}M$.

For each value of $\alpha_1$ there are at most $B^2$ retained
indices, one for each pair $(\alpha_2,\alpha_3)$ with
$0\le\alpha_2,\alpha_3\le B-1$. List
the first coordinates of the retained indices in increasing order as
$i_1\le\cdots\le i_{N^4}$. At least $j$ retained indices have first
coordinate at most $i_j$, so $j\le(i_j+1)B^2$, and hence
$i_j\ge\lfloor(j-1)/B^2\rfloor$.
Writing $N^4=B^2k+\varrho$ with $0\le \varrho<B^2$,
\[
S_1\ge\sum_{j=1}^{N^4}\Bigl\lfloor\frac{j-1}{B^2}\Bigr\rfloor
=B^2\frac{k(k-1)}{2}+k\varrho
=\frac{N^8}{2B^2}-\frac{N^4}{2}+\frac{\varrho(B^2-\varrho)}{2B^2}
\ge\frac{N^8}{2B^2}-\frac{N^4}{2}.
\]
Since $H\le N$, we have $H^{-1/3}M\ge N\ge 1$, and therefore
\[
B\le 96H^{-1/3}M+1\le 97H^{-1/3}M.
\]
Together with the hypothesis $H^{2/3}N^{4/3}\ge 2\cdot 97^2$, this gives
\[
2B^2\le 2\cdot 97^2H^{-2/3}M^2\le M^3=N^4,
\]
so that $\tfrac{N^4}{2}\le\tfrac{N^8}{4B^2}$. Consequently,
\[
S_1\ge\frac{N^8}{4B^2}\ge\frac{N^8}{4\cdot 97^2H^{-2/3}M^2}
=(4\cdot 97^2)^{-1}N^4H^{2/3}M.\qedhere
\]
\end{proof}

\section{The Archimedean bound}
\label{sec:arch}

We bound $\tfrac14\log|\Nm(\Delta)|$, the average of $\log|g(\Delta)|$
over the four embeddings $g$ of~$K$, from above in this section
and from below in the next.

\begin{lemma}\label{lem:arch}
With $\sL\defeq\log q$ as before, we have
\be\label{eq:arch}
\frac14\log|\Nm(\Delta)|\le 2N^4\log N
+(S_1+S_{23})\Bigl(\log N+\frac{\sL}{2}+\log 8\Bigr).
\ee
\end{lemma}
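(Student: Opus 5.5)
Since $\Nm(\Delta)=\prod_{g}g(\Delta)$ over the four embeddings $g\in\{1,\sigma,\tau,\sigma\tau\}$, it suffices to show that each conjugate satisfies
\[
\log|g(\Delta)|\le 2N^4\log N+(S_1+S_{23})\Bigl(\log N+\frac{\sL}{2}+\log 8\Bigr),
\]
and then average over $g$. Applying $g$ entrywise to the matrix $(\row_{\valpha})_{\valpha\in\cA}$ gives a matrix whose determinant is $g(\Delta)$. Its entries are $g(\theta_{\vn})^{\alpha_1}\,g\sigma(\theta_{\vn})^{\alpha_2}\,g\sigma\tau(\theta_{\vn})^{\alpha_3}$, which are products of conjugates of $\theta_{\vn}$.

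First we bound the conjugates. Every conjugate of $\theta_{\vn}$ has the form $n_1\pm n_2a\pm n_3b\pm n_4ab$. Using $0\le n_i\le N-1$, $|a|=\sqrt{|d|}\le\sqrt q$ by \eqref{eq:field}, and $|b|=\sqrt2$, we get
\[
|g'(\theta_{\vn})|\le (N-1)\bigl(1+\sqrt q+\sqrt2+\sqrt{2q}\bigr)<8N\sqrt q,
\]
since $1+\sqrt2\le\sqrt q$ and $1+\sqrt2<3$ give a total below $(3+3)\sqrt q$. So every entry in the row indexed by $\valpha$ has absolute value at most $(8N\sqrt q)^{\alpha_1+\alpha_2+\alpha_3}$.

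Next we apply Hadamard's inequality row by row. The Euclidean norm of the row $g(\row_{\valpha})$ is at most $\sqrt{N^4}\,(8N\sqrt q)^{|\valpha|}$, where $|\valpha|=\alpha_1+\alpha_2+\alpha_3$. Taking the product over the $N^4$ retained rows and then logarithms gives
\[
\log|g(\Delta)|\le N^4\cdot\tfrac12\log(N^4)+\sum_{\valpha\in\cA}|\valpha|\Bigl(\log N+\tfrac12\sL+\log 8\Bigr)
=2N^4\log N+(S_1+S_{23})\Bigl(\log N+\frac{\sL}{2}+\log 8\Bigr),
\]
which is exactly the bound needed for each $g$. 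Averaging over the four embeddings yields \eqref{eq:arch}.

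There is essentially no obstacle in this argument. The only points needing care are the uniform bound $8N\sqrt q$, which must hold for all four sign patterns and uses $|d|\le q$ and $q\ge9$, and the observation that conjugation permutes the three variables. Because of that permutation, each row still has total exponent $|\valpha|$ under every embedding, so the sum of exponents over the rows equals $S_1+S_{23}$ for each $g$.
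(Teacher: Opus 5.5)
Your proof is correct and follows the paper's argument essentially verbatim: you bound every conjugate of $\theta_{\vn}$ by $8N\sqrt{q}$, apply Hadamard's inequality to the matrix of each conjugate $g(\Delta)$ with row norms at most $N^2(8N\sqrt{q})^{|\valpha|}$, and average over the four embeddings. One small inaccuracy in your closing remark: applying $g$ does not permute the three variables (for $g=\tau$ one obtains $\tau(\theta_{\vn})$, which is not among $\theta_{\vn},\sigma(\theta_{\vn}),\sigma\tau(\theta_{\vn})$). This is harmless, since, as your main argument correctly uses, each factor is some conjugate of $\theta_{\vn}$ raised to the corresponding exponent.
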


\begin{proof}
Let $g\in\Gal(K/\Q)=\{1,\sigma,\tau,\sigma\tau\}$. Recalling the
definition \eqref{eq:thevndefn}, and taking into account
the bounds $|a|=\sqrt{|d|}$, $|b|=\sqrt{2}$, and $1\le|d|\le q$,
for every $\vn\in \cube$ we have
\[
\bigl|g(\theta_{\vn})\bigr|\le(N-1)\bigl(1+|a|+|b|+|ab|\bigr)
=(N-1)\bigl(1+\sqrt{|d|}\bigr)\bigl(1+\sqrt{2}\bigr)<8N\sqrt{q}.
\]
Now $g(\Delta)$ is the determinant of the matrix with rows
$g(\row_{\valpha})$, whose entries are
\[
g(\theta_{\vn})^{\alpha_1}\,g\sigma(\theta_{\vn})^{\alpha_2}\,
g\sigma\tau(\theta_{\vn})^{\alpha_3}.
\]
Each factor is the image of $\theta_{\vn}$ under some element of
$\Gal(K/\Q)$, so each entry has absolute value not exceeding
$(8N\sqrt{q})^{|\valpha|}$, where
$|\valpha|\defeq\alpha_1+\alpha_2+\alpha_3$.
Consequently, the row $g(\row_{\valpha})$ has Euclidean norm
at most $N^2\,(8N\sqrt{q})^{|\valpha|}$. By Hadamard's inequality,
\[
|g(\Delta)|\le\prod_{\valpha\in\cA}N^2\,(8N\sqrt{q})^{|\valpha|},
\]
or equivalently,
\[
\log|g(\Delta)|\le 2N^4\log N+(S_1+S_{23})\log(8N\sqrt{q}\,).
\]
This bound holds for each $g\in\Gal(K/\Q)$.
Since $|\Nm(\Delta)|=\prod_g|g(\Delta)|$, averaging the preceding bound
over $g$ we obtain \eqref{eq:arch}.
\end{proof}

\section{The \texorpdfstring{$p$}{p}-adic bound}
\label{sec:padic}

\begin{definition}\label{def:admissible}
An odd prime $p$ is \emph{admissible} if $p>H$ and $\chi(p)=-1$.
\end{definition}

\begin{lemma}[Frobenius congruence]\label{lem:frob}
Let $p$ be an odd prime with $\chi(p)=-1$, and put
\[
g_p\defeq\begin{cases}
\sigma&\text{if $\smallleg{2}{p}=1$,}\\[1ex]
\sigma\tau&\text{if $\smallleg{2}{p}=-1$.}
\end{cases}
\]
Then
\[
\theta^p\equiv g_p(\theta)\bmod{p\,\cO}\qquad(\theta\in\cO).
\]
\end{lemma}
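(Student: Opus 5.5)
Write $\theta=x_1+x_2a+x_3b+x_4ab$ with $x_i\in\Z$. The plan is to compute $\theta^p$ modulo $p\,\cO$ directly, using the Frobenius property of the $p$-th power map on the commutative ring $\cO/p\cO$.

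\emph{Additivity of the $p$-th power.} For any commutative ring of characteristic dividing $p$, the map $u\mapsto u^p$ is additive. This holds because the binomial coefficients $\binom{p}{k}$ with $0<k<p$ are divisible by $p$. Hence, by iterating over the four summands,
\[
\theta^p\equiv x_1^p+x_2^p a^p+x_3^p b^p+x_4^p (ab)^p\bmod p\,\cO.
\]
Fermat's little theorem gives $x_i^p\equiv x_i\bmod p$.

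\emph{The powers of the generators.} Since $p$ is odd, $a^p=a\cdot(a^2)^{(p-1)/2}=a\,d^{(p-1)/2}$. Euler's criterion gives $d^{(p-1)/2}\equiv\smallleg{d}{p}\bmod p$. By \eqref{eq:field} this symbol equals $\chi(p)=-1$; note that $\chi(p)=-1$ implies $p\nmid d$, so Euler's criterion applies. Therefore $a^p\equiv -a$. In the same way, $b^p=b\cdot 2^{(p-1)/2}\equiv\smallleg{2}{p}\,b$. Multiplying, $(ab)^p=a^pb^p\equiv-\smallleg{2}{p}\,ab$. All of these congruences hold in $\cO$ modulo $p\cO$, because each difference is an integer multiple of $p$ times $a$, $b$, or $ab$.

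\emph{Matching with $g_p$.} There are two cases.
\begin{itemize}
\item If $\smallleg{2}{p}=1$, then $\theta^p\equiv x_1-x_2a+x_3b-x_4ab=\sigma(\theta)$.
\item If $\smallleg{2}{p}=-1$, then $\theta^p\equiv x_1-x_2a-x_3b+x_4ab=\sigma\tau(\theta)$.
\end{itemize}
This is exactly $g_p(\theta)$.

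The main obstacle is small, namely bookkeeping: the congruences must be taken in $\cO$ rather than in the ring of integers of $K$. The argument handles this, since every step stays in $\Z[a,b]$ and uses only integer congruences multiplied by basis elements. Apart from that, one should just check the edge fact that $p\nmid d$, which follows from $\chi(p)\neq 0$.
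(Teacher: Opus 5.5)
Your proof is correct and is essentially the paper's argument. The paper also uses the Frobenius endomorphism of $\cO/p\cO$, which fixes the image of $\Z$ by Fermat's little theorem. It then applies Euler's criterion to get $a^p\equiv -a$ and $b^p\equiv\smallleg{2}{p}\,b$, and matches the resulting coefficients with $\sigma(\theta)$ or $\sigma\tau(\theta)$.
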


\begin{proof}
The ring $\cO/p\,\cO$ has characteristic $p$, so its Frobenius
endomorphism $x\mapsto x^p$ is a ring homomorphism, and it fixes the
image of $\Z$ by Fermat's little theorem.
By Euler's criterion, and since $\chi(p)=-1$ in \eqref{eq:field},
\[
a^p=a\cdot d^{(p-1)/2}\equiv-a\bmod{p\,\cO}
\mand
b^p=b\cdot 2^{(p-1)/2}\equiv\leg{2}{p}b\bmod{p\,\cO}.
\]
Hence, for every $\theta=z_1+z_2a+z_3b+z_4ab$ with
$z_1,\ldots,z_4\in\Z$, we have
\[
\theta^p\equiv z_1-z_2a+\leg{2}{p}z_3b-\leg{2}{p}z_4ab
\bmod{p\,\cO},
\]
and the right side is $g_p(\theta)$ by the definition of $g_p$.
\end{proof}

In other words, $g_p$ is the Frobenius element of $\Gal(K/\Q)$ at $p$.

\begin{lemma}[Extraction of prime powers]\label{lem:extract}
For every admissible prime $p$,
\[
\Delta\in p^{E_p}\cO,\qquad
E_p\defeq\sum_{\valpha\in\cA}\Bigl\lfloor\frac{\alpha_1}{p}\Bigr\rfloor,
\]
and consequently $p^{4E_p}$ divides the nonzero integer
$\Nm(\Delta)$.
\end{lemma}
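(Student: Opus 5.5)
The plan is to modify the matrix defining $\Delta$ by row operations that leave the determinant unchanged, so that afterwards every entry of the row indexed by $\valpha$ lies in $p^{k_\valpha}\cO$, where $k_\valpha\defeq\lfloor\alpha_1/p\rfloor$. Fix an admissible prime $p$ and a retained index $\valpha\in\cA$. Write $\alpha_1=pk+\varrho$ with $k=k_\valpha$ and $0\le\varrho<p$. Let $g=g_p\in\{\sigma,\sigma\tau\}$ be the Frobenius element of Lemma~\ref{lem:frob}. Define the modified row
\[
\row'_{\valpha}\defeq\Bigl(\theta_{\vn}^{\varrho}\bigl(\theta_{\vn}^p-g(\theta_{\vn})\bigr)^{k}\,\sigma(\theta_{\vn})^{\alpha_2}\,\sigma\tau(\theta_{\vn})^{\alpha_3}\Bigr)_{\vn\in\cube}\in\cO^{\cube}.
\]
By Lemma~\ref{lem:frob}, $\theta_{\vn}^p-g(\theta_{\vn})\in p\,\cO$. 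Hence every entry of $\row'_{\valpha}$ lies in $p^k\cO$.

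Next I expand the modified row by the binomial theorem:
\[
\row'_{\valpha}=\sum_{i=0}^{k}\binom{k}{i}(-1)^i\,\row_{\vbeta^{(i)}}.
\]
Here $\vbeta^{(0)}=\valpha$. For $i\ge1$, $\vbeta^{(i)}=(\alpha_1-pi,\alpha_2+i,\alpha_3)$ if $g=\sigma$, and $\vbeta^{(i)}=(\alpha_1-pi,\alpha_2,\alpha_3+i)$ if $g=\sigma\tau$. The key point is that $g(\theta_{\vn})$ is again one of the two variables $\sigma(\theta_{\vn})$, $\sigma\tau(\theta_{\vn})$, so each term is a genuine monomial row with nonnegative exponents. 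Its weight is
\[
w(\vbeta^{(i)})=w(\valpha)-pi+Hi<w(\valpha),
\]
because $p>H$. So $\vbeta^{(i)}\prec\valpha$ for $i\ge1$. By Lemma~\ref{lem:greedy}$(a)$, each such $\row_{\vbeta^{(i)}}$ lies in the $K$-span of the rows $\row_{\vbeta}$ with $\vbeta\in\cA$ and $\vbeta\prec\valpha$. Thus $\row'_{\valpha}=\row_{\valpha}+(\text{a $K$-combination of earlier retained rows})$.

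I then do this simultaneously for all $\valpha\in\cA$, listed in the order $\prec$. The new matrix equals $(I+L)$ times the original matrix, where $L$ is strictly lower triangular with entries in $K$. Its determinant is therefore still $\Delta$. No integrality of the combination coefficients is needed: the new rows visibly have entries in $p^{k_\valpha}\cO$, whatever the coefficients of $L$ are.

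Pulling the factor $p^{k_\valpha}$ out of each row by multilinearity leaves a determinant with entries in $\cO$, hence an element of $\cO$. This gives $\Delta\in p^{E_p}\cO$. Writing $\Delta=p^{E_p}x$ with $x\in\cO$, we get $\Nm(\Delta)=p^{4E_p}\Nm(x)$ with $\Nm(x)\in\Z$. Moreover $\Nm(\Delta)\ne0$, since $\Delta\ne0$ and $K$ is a field.

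The step needing the most care is the weight comparison together with the triangularity bookkeeping. One must check that every term with $i\ge1$ strictly precedes $\valpha$ (this uses $p>H$ exactly), and that replacing each row by itself plus a combination of strictly earlier retained rows is a unimodular triangular transformation. Lemma~\ref{lem:greedy}$(a)$ is precisely what makes the second point work, even when some $\vbeta^{(i)}$ were not retained.
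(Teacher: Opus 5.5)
Your proposal is correct and follows essentially the same argument as the paper. Both replace $\theta_{\vn}^{pk}$ by $(\theta_{\vn}^p-g_p(\theta_{\vn}))^k$ and expand by the binomial theorem. Both then use $p>H$ to get a strict weight decrease and Lemma~\ref{lem:greedy}$(a)$ to express the new row as the old one plus earlier retained rows (a lower unipotent change of basis that leaves $\Delta$ unchanged), and finally pull $p^{\lfloor\alpha_1/p\rfloor}$ out of each row and take norms.
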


\begin{proof}
Let $j\defeq 2$ if $g_p=\sigma$ and $j\defeq 3$ if $g_p=\sigma\tau$,
so that $g_p(\theta_{\vn})$ is the $j$-th coordinate of $\lmap\vn$. Let
$\ve_1,\ve_2,\ve_3$ be the standard basis of $\Z^3$. For
$\valpha\in\cA$ write $\alpha_1=pk+\varrho$ with $0\le\varrho<p$,
$k=\lfloor\alpha_1/p\rfloor$, and define a new row
\[
\widetilde{\row}_{\valpha}\defeq\Bigl(\theta_{\vn}^{\varrho}\,
\bigl(\theta_{\vn}^p-g_p(\theta_{\vn})\bigr)^k\,
\sigma(\theta_{\vn})^{\alpha_2}\,\sigma\tau(\theta_{\vn})^{\alpha_3}
\Bigr)_{\vn\in \cube}.
\]

\medskip\noindent{\sc Step 1: \it divisibility.} By Lemma~\ref{lem:frob},
$\theta_{\vn}^p-g_p(\theta_{\vn})\in p\,\cO$, so every entry of
$\widetilde{\row}_{\valpha}$ lies in the module $p^k\cO$.

\medskip\noindent{\sc Step 2: \it transforming the rows.}
We claim that $\widetilde{\row}_{\valpha}$ is the sum of $\row_{\valpha}$
and some earlier rows (relative to $\prec$). Expanding by the binomial
theorem, we have
\[
\theta_{\vn}^{\varrho}\bigl(\theta_{\vn}^p-g_p(\theta_{\vn})\bigr)^k
=\sum_{m=0}^{k}(-1)^m\binom{k}{m}
\theta_{\vn}^{\alpha_1-pm}g_p(\theta_{\vn})^m,
\]
hence, exactly,
\[
\widetilde{\row}_{\valpha}=\row_{\valpha}
+\sum_{m=1}^{k}(-1)^m\binom{k}{m}\,\row_{\valpha-pm\,\ve_1+m\,\ve_j}.
\]
The indices on the right lie in $\Z_{\ge 0}^3$ because for each
$1\le m\le k$, we have
\[
\alpha_1-pm\ge\varrho\ge 0,
\]
and because $p>H$,
\[
w(\valpha-pm\,\ve_1+m\,\ve_j)=w(\valpha)-pm+Hm=w(\valpha)-m(p-H)<w(\valpha).
\]
Thus, every such index $\valpha-pm\,\ve_1+m\,\ve_j$
strictly precedes $\valpha$ in the order $\prec$. By
Lemma~\ref{lem:greedy}$(a)$, the corresponding row lies in
$\spn_K\{\row_{\vbeta}:\vbeta\in\cA,\ \vbeta\prec\valpha\}$.
Therefore
\[
\widetilde{\row}_{\valpha}=\row_{\valpha}
+\ssum{\vbeta\in\cA\\ \vbeta\prec\valpha}L_{\valpha\vbeta}\,\row_{\vbeta}
\qquad(L_{\valpha\vbeta}\in K).
\]

\medskip\noindent{\sc Step 3: \it conclusion.}
With $\cA$ ordered by $\prec$, the argument in Step~2
says that the matrix $(\widetilde{\row}_{\valpha})_{\valpha\in\cA}$ is
equal to $L\cdot(\row_{\valpha})_{\valpha\in\cA}$ with some lower triangular matrix $L$ that has ones on the diagonal (in other words, a lower unipotent matrix). Hence
\[
\det\bigl(\widetilde{\row}_{\valpha}\bigr)_{\valpha\in\cA}
=\det\bigl(\row_{\valpha}\bigr)_{\valpha\in\cA}=\Delta.
\]
By Step~1, each $\widetilde{\row}_{\valpha}$ is
$p^{\lfloor\alpha_1/p\rfloor}$
times a row with entries in $\cO$; pulling these factors out of the
determinant gives $\Delta=p^{E_p}x$ with $x\in\cO$. Finally,
$\Nm(\Delta)=p^{4E_p}\Nm(x)$ with $\Nm(x)\in\Z$.
\end{proof}

\begin{corollary}\label{cor:padic}
Let $C_1$ and $C_2$ be as in Lemma~\ref{lem:bias} for some
$\lambda\in(0,2]$. Let $\beta\in(0,1)$ be a real zero of
$L(s,\chi)$, $\delta=(1-\beta)\sL$, $H\ge 319$, and $M\ge 8$. Then
\be\label{eq:padic}
\frac14\log|\Nm(\Delta)|\ge S_1\Bigl(\log M-\log H
-C_1\,\frac{\delta(\log M)^2}{\sL}
-\Bigl(C_2+\frac{1}{2H}\Bigr)\sL\Bigr)-2N^4M.
\ee
\end{corollary}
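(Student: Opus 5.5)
The plan is to combine Lemma~\ref{lem:extract} over all admissible primes up to $M$, and then feed in the prime bias of Lemma~\ref{lem:bias} with $X=M$.

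\emph{Step 1: the norm is divisible by all the prime powers at once.} By Lemma~\ref{lem:extract}, for each admissible prime $p$ the integer $p^{4E_p}$ divides the nonzero integer $\Nm(\Delta)$. Distinct primes give coprime prime powers, so the product $\prod_{p}p^{4E_p}$, taken over admissible $p\le M$, divides $\Nm(\Delta)$. Since $\Nm(\Delta)\ne 0$, this gives
\[
\frac14\log|\Nm(\Delta)|\ge\ssum{H<p\le M\\ \chi(p)=-1}E_p\log p .
\]
Every prime $p>H\ge 319$ is odd, so the condition ``$H<p\le M$, $\chi(p)=-1$'' describes exactly the admissible primes up to $M$. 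Dropping admissible primes larger than $M$ is harmless, since each such $E_p$ is nonnegative.

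\emph{Step 2: remove the floors.} From $\lfloor\alpha_1/p\rfloor\ge\alpha_1/p-1$ and $|\cA|=N^4$ we get $E_p\ge S_1/p-N^4$. Summing over the admissible primes above gives
\[
\ssum{H<p\le M\\ \chi(p)=-1}E_p\log p\ge S_1\ssum{H<p\le M\\ \chi(p)=-1}\frac{\log p}{p}-N^4\sum_{p\le M}\log p .
\]
The error term is handled by the explicit Chebyshev bound $\vartheta(x)<1.01624\,x$ of Rosser--Schoenfeld~\cite{RS}, which gives $\sum_{p\le M}\log p<2M$. Hence the error is at most $2N^4M$.

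\emph{Step 3: insert the prime bias.} Apply \eqref{eq:bias2} of Lemma~\ref{lem:bias} with $X=M\ge 8$ and the integer $H\ge 319$; the same $C_1,C_2$ (for the given $\lambda$) are allowed. This yields
\[
\ssum{H<p\le M\\ \chi(p)=-1}\frac{\log p}{p}\ge\log M-\log H-C_1\frac{\delta(\log M)^2}{\sL}-\Bigl(C_2+\frac{1}{2H}\Bigr)\sL .
\]
Since $S_1\ge 0$, this inequality may be multiplied by $S_1$. Combining it with Steps~1 and~2 gives \eqref{eq:padic}.

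The only delicate point is the error term in Step~2. One must subtract $N^4$ for every prime rather than only for primes where $\alpha_1\ge p$, and then the crude bound requires an explicit Chebyshev estimate to get the constant $2$. Everything else is bookkeeping.
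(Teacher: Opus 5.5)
Your proposal is correct and follows the paper's proof essentially step for step. The steps are the same: $\Nm(\Delta)$ is divisible by $\prod_{p}p^{4E_p}$ over the admissible $p\le M$, then $E_p\ge S_1/p-N^4$, then the Chebyshev bound $\sum_{p\le M}\log p\le 2M$, and finally \eqref{eq:bias2} with $X=M$ multiplied by $S_1\ge 0$. One minor point: this step is less delicate than you suggest, since any classical Chebyshev estimate such as $\sum_{p\le x}\log p<x\log 4$ already gives the constant $2$.
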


\begin{proof}
By Lemma~\ref{lem:extract}, the nonzero integer $\Nm(\Delta)$ is
divisible by $p^{4E_p}$ for every admissible prime $p$, and hence
by the product of these prime powers over the admissible primes
$p\le M$. Therefore,
\[
\frac14\log|\Nm(\Delta)|\ge\ssum{p\le M\\ p\text{ admissible}}E_p\log
p.
\]
Since $\lfloor u\rfloor>u-1$ and $|\cA|=N^4$, we have
$E_p\ge S_1/p-N^4$ for any such $p$, and so
\[
\frac14\log|\Nm(\Delta)|
\ge S_1\ssum{p\le M\\ p\text{ admissible}}\frac{\log p}{p}
-N^4\sum_{p\le M}\log p.
\]
As $H\ge 319$, the admissible primes $p\le M$ are precisely the primes
$H<p\le M$ with $\chi(p)=-1$. Hence \eqref{eq:bias2} with
$X\defeq M\ge 8$ shows that the first sum is at least the expression
in parentheses in \eqref{eq:padic}. Since $S_1\ge 0$, and since
$\sum_{p\le M}\log p\le 2M$ by Chebyshev's bound, the result follows.
\end{proof}

\section{Conclusion of the proof}
\label{sec:conclusion}

\begin{proof}[Proof of Theorem~\ref{thm:main}]
We apply Lemma~\ref{lem:bias} with $\lambda\defeq\tfrac32$ and with
\[
C_1\defeq\frac{2\er^{3/2}}{9}\mand C_2\defeq\frac{\er^{3/2}}{4},
\]
and we put
\[
H\defeq 9216\cdot 97^2,\qquad
\omega\defeq\lceil 8(C_2+\tfrac85)\rceil,\qquad
q_0\defeq10^{10},\qquad
\delta_0\defeq\frac{9}{128(\omega+1)C_1}.
\]
We show that the first assertion of the theorem holds with these
values of $q_0$ and $\delta_0$. Let $\chi$ have conductor $q\ge q_0$,
let $\beta\in(0,1)$ be a real zero of $L(s,\chi)$, and put
$\delta\defeq(1-\beta)\sL$.

\medskip\noindent{\sc Step 1: \it the parameters.}
Let $N\defeq\lceil q^\omega\rceil$ and $M\defeq N^{4/3}$. Since
$q^\omega\le N\le q^{\omega+1}$, we have
\be\label{eq:logM}
\frac43\,\omega\,\sL\le\log M\le\frac43(\omega+1)\sL.
\ee
Moreover, $q\ge q_0\ge H\ge 9$, hence $d\ne 2$, and
\[
319\le H\le q\le N\mand M\ge 8.
\]
With these values of $H$ and $N$ we form $\Delta$, $S_1$, and
$S_{23}$ as in \S\ref{sec:determinant}. Lemma~\ref{lem:arch} and
Corollary~\ref{cor:padic} can then be applied.

\medskip\noindent{\sc Step 2: \it the sums $S_1$ and $S_{23}$.}
Since $H\ge 8\cdot 97^3$, we have $H^{2/3}\ge 4\cdot 97^2$. In
particular, the hypotheses of Lemma~\ref{lem:degree} are met, and
that lemma gives
\[
S_1\ge(4\cdot 97^2)^{-1}N^4H^{2/3}M\ge N^4M
\mand
S_{23}\le 192N^4H^{-1/3}M\le\frac{768\cdot 97^2}{H}\,S_1.
\]
By the choice of $H$, it follows that
\be\label{eq:S1S23}
S_1\ge N^4M\mand S_{23}\le\frac{1}{12}\,S_1.
\ee

\medskip\noindent{\sc Step 3: \it comparison of the two bounds.}
Combining \eqref{eq:arch} and \eqref{eq:padic}, and using the relation
$\log N=\tfrac34\log M$, we get that
\dalign{
&S_1\Bigl(\log M-\log H
-C_1\,\frac{\delta(\log M)^2}{\sL}
-\Bigl(C_2+\frac{1}{2H}\Bigr)\sL\Bigr)-2N^4M\\
&\qquad\qquad\qquad\qquad\le\frac32N^4\log M
+(S_1+S_{23})\Bigl(\frac34\log M+\frac{\sL}{2}+\log 8\Bigr).
}
By \eqref{eq:S1S23}, we have
\[
2N^4M\le 2S_1,\qquad
\frac32N^4\log M\le\frac{3\log M}{2M}\,S_1\le S_1,\qquad
S_1+S_{23}\le\frac{13}{12}\,S_1.
\]
Inserting these bounds and dividing by $S_1>0$, we obtain
\[
\frac{3}{16}\log M\le C_1\,\frac{\delta(\log M)^2}{\sL}
+\Bigl(C_2+\frac{1}{2H}+\frac{13}{24}\Bigr)\sL
+\log H+3+\frac{13}{12}\log 8.
\]
Since $q\ge q_0$, we have $\sL>23$, and hence
$\log H+3+\tfrac{13}{12}\log 8<24<\tfrac{24}{23}\sL$. Moreover,
$\tfrac{1}{2H}+\tfrac{13}{24}+\tfrac{24}{23}<\tfrac85$. Therefore,
\be\label{eq:final}
\frac{3}{16}\log M\le C_1\,\frac{\delta(\log M)^2}{\sL}
+(C_2+\tfrac85)\sL.
\ee

\medskip\noindent{\sc Step 4: \it conclusion.}
The inequality $\delta\ge\delta_0$ is equivalent to
$\tfrac{3}{16}\le\tfrac43(\omega+1)C_1\,\delta+\tfrac{3}{32}$ by the
definition of $\delta_0$. To prove the latter, we divide
\eqref{eq:final} by $\log M$ and use \eqref{eq:logM}, and then the
choice of $\omega$ in the last step. This gives
\[
\frac{3}{16}\le C_1\,\delta\,\frac{\log M}{\sL}
+(C_2+\tfrac85)\frac{\sL}{\log M}
\le\frac43(\omega+1)C_1\,\delta
+\frac{3(C_2+\frac85)}{4\omega}
\le\frac43(\omega+1)C_1\,\delta+\frac{3}{32}.\qedhere
\]
\end{proof}

\begin{remark}[Effectivity]\label{rem:effective}
The constants $q_0$ and $\delta_0$ in Theorem~\ref{thm:main}
are explicit. The proof of the theorem applies
Lemma~\ref{lem:bias} with $\lambda=\tfrac32$, for which
\[
C_1=\frac{2\er^{3/2}}{9}=0.9959\cdots\mand
C_2=\frac{\er^{3/2}}{4}=1.1204\cdots,
\]
and the lemma in turn relies on the explicit form of Mertens'
theorem given by Rosser and Schoenfeld~\cite[Theorem~6]{RS}. With
these values, the proof of Theorem~\ref{thm:main} has
$\omega=22$,
\[
q_0=10^{10}\mand
\delta_0=\frac{81}{5888\,\er^{3/2}}=3.069\cdots\times 10^{-3},
\]
so the first assertion of the theorem holds with
\[
q_0=10^{10}\mand\delta_0=3.0\times 10^{-3}.
\]
The constant $c$ in the second assertion depends also on the real
zeros of $L(s,\chi)$ for the finitely many characters $\chi$ of
conductor $q<q_0$. As shown in the proof of the second
assertion, a computation that we have not verified bounds
these zeros away from one, and with it one can take $c=3.0\times 10^{-3}$.
\end{remark}

\begin{remark}[Where each hypothesis is used]\label{rem:hypotheses}
\leavevmode
\setlength{\leftmargini}{2.6em}
\renewcommand{\labelenumi}{\textup{(\roman{enumi})}}
\begin{enumerate}
\item The zero $\beta$ enters only through Lemma~\ref{lem:bias}. By
\eqref{eq:bias2}, it guarantees that the admissible primes $p\le M$
satisfy
\[
\sum_{p}\frac{\log p}{p}\ge\log M-\log H
-C_1\,\frac{\delta(\log M)^2}{\sL}
-\Bigl(C_2+\frac{1}{2H}\Bigr)\sL,
\]
which is close to $\log M$ when $\delta$ is small and $\log M$ is a
large multiple of $\sL$.
\item The condition $\chi(p)=-1$ ensures that the Frobenius element
at $p$ is one of the two automorphisms $\sigma$ and $\sigma\tau$
that appear among the variables. A prime with Frobenius element
$\tau$, that is, with $\chi(p)=1$ and $\smallleg{2}{p}=-1$, cannot
be used, since $\tau(\theta)$ is not one of the three variables
$\theta$, $\sigma(\theta)$, $\sigma\tau(\theta)$.
\item The condition $p>H$ ensures that replacing $\theta^p$ by
$g_p(\theta)$ decreases the weight. The weight $H$ on the second and
third variables gives $S_{23}\le\tfrac{1}{12}S_1$ for our choice of
$H$, so that the main term $\tfrac34(S_1+S_{23})\log M$ of the upper
bound is at most $\tfrac{13}{16}S_1\log M$, which is less than the
main term $S_1\log M$ of the lower bound.
\item The auxiliary number $\sqrt{2}$ raises the rank of the lattice
to four while keeping three variables. This is the source of the
relation $\log N=\tfrac34\log M$.
\item The linear independence over $\Q$ of the coordinates of
$\vxi$, which amounts to that of $1,a,b,ab$, allows
Proposition~\ref{prop:interp} to be applied with constants that do
not depend on the number $d$.
\item The choice $N=\lceil q^\omega\rceil$ with $\omega$ large makes
the terms of size $\sL$, which come from the size of $\theta_{\vn}$
and from the loss $\bigl(C_2+\tfrac{1}{2H}\bigr)\sL$ in
Lemma~\ref{lem:bias}, at most
$\tfrac{3}{32}\log M$.
\item The condition $q\ge q_0$ serves to absorb the bounded terms
$\log H+3+\tfrac{13}{12}\log 8$ into a multiple of
$\sL$. It also gives $q\ge 9$, hence $d\ne 2$,
and $N\ge H$.
\end{enumerate}
\end{remark}

\section*{Acknowledgments}

The author thanks the Max Planck Institute for Mathematics in
Bonn for its support and excellent working conditions during the
preparation of this paper.

The argument presented in this paper is due to OpenAI~\cite{OAI}.
The large language model Claude (Anthropic) was used throughout the
preparation of the paper. It produced a report on the
manuscript~\cite{OAI} together with an expanded draft of the proof,
which is the basis of the present text. It was also used for
drafting and revising portions of the text and of the proofs, for
working out the explicit constants, for preparing the figure and
the diagrams, for literature searches, and for proof checking. The
author has worked through every line of the paper, has verified the
mathematical arguments in their entirety, and takes full
responsibility for its content.


\begin{thebibliography}{99}

\bibitem{Dav}
H.~Davenport,
\emph{Multiplicative number theory}, 2nd ed., revised by
H.~L.~Montgomery.
Graduate Texts in Mathematics, vol.~74, Springer-Verlag, New York,
1980.

\bibitem{Gol}
D.~M.~Goldfeld,
The class number of quadratic fields and the conjectures of Birch
and Swinnerton-Dyer.
\emph{Ann. Scuola Norm. Sup. Pisa Cl. Sci. (4)} 3 (1976), no.~4,
623--663.

\bibitem{GZ}
B.~H.~Gross and D.~B.~Zagier,
Heegner points and derivatives of $L$-series.
\emph{Invent. Math.} 84 (1986), no.~2, 225--320.

\bibitem{Lau}
M.~Laurent,
Sur quelques r\'esultats r\'ecents de transcendance.
\emph{Ast\'erisque} 198--199--200 (1991), 209--230.

\bibitem{LZZ}
R.~F.~Lu, A.~Zaman, and H.~Zhao,
Numerical computations concerning Landau--Siegel zeros.
Preprint, arXiv:2602.03626, 2026.

\bibitem{OAI}
OpenAI,
Uniform exclusion of Landau--Siegel zeros.
Manuscript, dated October 1, 2026.

\bibitem{Pag}
A.~Page,
On the number of primes in an arithmetic progression.
\emph{Proc. London Math. Soc. (2)} 39 (1935), 116--141.

\bibitem{RS}
J.~B.~Rosser and L.~Schoenfeld,
Approximate formulas for some functions of prime numbers.
\emph{Illinois J. Math.} 6 (1962), 64--94.

\bibitem{Sie}
C.~L.~Siegel,
\"Uber die Classenzahl quadratischer Zahlk\"orper.
\emph{Acta Arith.} 1 (1935), 83--86.

\end{thebibliography}
\end{document}